\theoremstyle{change}%
\def\MR#1#2{\href{http://www.ams.org/mathscinet-getitem?mr=#1}{#2}}%
\newtheorem{definition}{Definition:}[section]%
\newtheorem{theorem}[definition]{Theorem:}%
\newtheorem{lemma}[definition]{Lemma:}%
\newtheorem{proposition}[definition]{Proposition:}%
\newtheorem{corollary}[definition]{Corollary:}%
\newtheorem{remark}[definition]{Remark:}}%
\newtheorem{example}[definition]{Example:}}%
\newenvironment{proof}
  {{\bf Proof:}}
  {\qquad \hspace*{\fill} $\Box$}
\newcommand{\N}{\mathbb{N}}%
\newcommand{\R}{\mathbb{R}}%
\renewcommand{\P}{\mathbb{P}}%
\newcommand{\tm}{\times}%
\newcommand{\End}{\operatorname{End}}%
\newcommand{\id}{\operatorname{id}}%
\newcommand{\dist}{\operatorname{dist}}%
\newcommand{\st}{\operatorname{st}}%
\newcommand{\ep}{\varepsilon}%
\newcommand{\MC}{\mathcal{M}}%
\newcommand{\RC}{\mathcal{R}}%
\newcommand{\rme}{\mathrm{e}}%
\newcommand{\rmS}{\mathrm{S}}%
\newcommand{\SO}{\mathrm{SO}}%
\begin{document}

\title{Topological Conjugacy of Real Projective Flows}%
\author{V. Ayala\footnote{Departamento de Matem\'atica, Universidad Cat\'olica del Norte, Antofagasta, Chile, vayala@ucn.cl. This author was supported by Fondecyt Project no.~1100375.}, 
C. Kawan\footnote{Universit\"{a}t Augsburg, Universit\"{a}tsstrasse 14, 86159 Augsburg, Germany, christoph.kawan@math.uni-augsburg.de. This author was supported by DFG grant Co 124/17-2 within DFG priority program 1305.}}%
\maketitle%

\begin{center}
{\small\it We dedicate this paper to Fritz Colonius and Wolfgang Kliemann}%
\end{center}

\begin{abstract}
In this paper we prove the following topological classification result for flows on real projective space induced by linear flows on Euclidean space: Two flows on the projective space $\P(V)$ of a finite-dimensional real vector space $V$, induced by endomorphisms $A$ and $B$ of $V$, are topologically conjugate if and only if the Jordan structures of $A$ and $B$ coincide except for the real parts of the eigenvalues whose values may differ but whose
order and multiplicities must agree. Our proof is mainly based on ideas of Kuiper who considered the discrete-time analogue of this classification problem. We also correct a mistake in Kuiper's proof.%
\end{abstract}

\begin{center}
{\small{\bf Keywords:} Topological Conjugacy; Flows; Jordan Form; Projective Space}%
\end{center}

\section{Introduction}
\label{intro}

The topological classification of linear dynamical systems or, more general, linear group actions has a long history. In fact, it goes back to Poincar\'e \cite{Poi} who knew that orthogonal matrices in dimension $2$ are topologically conjugate if and only if they are linearly conjugate. De Rham \cite{DR1} conjectured that this equivalence is true in arbitrary finite dimensions. The first counterexamples to his conjecture were given by Cappell and Shaneson \cite{CS1}, who also proved that the conjecture holds up to dimension five. We refer to the articles \cite{CS1,CS2,CS3,Cea,DR1,DR2,HPe,HPa,KRo,Sch,Str} for more information about this thread of research. In particular, we recommend the introduction of Cruz \cite{Cru} for a more detailed historical account. There are two special cases of linear actions which are much easier to handle than the general case, namely, hyperbolic actions and continuous-time linear flows. For instance, Strelcyn \cite{Str} proved that two hyperbolic linear operators on a complex Banach space are topologically conjugate if and only if the dimensions of the stable and unstable subspaces coincide. Kuiper \cite{Ku1} and independently Ladis \cite{Lad} provided a complete classification of linear flows in finite dimensions. Also equivalence by H\"{o}lder or Lipschitz maps has been considered for linear flows and complete classification results are available, see \cite{KSt,MMe}.%

Another thread of research concerns actions on compact manifolds induced by linear actions. Naturally, every automorphism of a finite-dimensional vector space $V$ induces a diffeomorphism on the corresponding projective space, or more general, on the Grassmann manifold of $k$-dimensional subspaces of $V$, and the flag manifolds whose elements are the flags $V_1 \subset \cdots \subset V_r$ of linear subspaces $V_i \subset V$ of fixed dimensions. In Batterson \cite{Bat}, one finds a characterization of the structurally stable diffeomorphisms of this kind on the Grassmann manifolds. In Ayala, Colonius and Kliemann \cite{ACK}, the Lyapunov forms of matrices are characterized topologically by studying Morse decompositions of their induced flows on the Grassmann and flag manifolds.%

A special case of such induced systems has been treated by Kuiper \cite{Ku2}, who considered the discrete-time dynamical system on a real projective space induced by a linear automorphism. He provided an almost complete topological classification of such systems saying that two projective transformations induced by invertible matrices $A$ and $B$ are topologically conjugate if $A$ and $B$ can be written in the form%
\begin{eqnarray*}
  A &=& \lambda_1\sigma_1 \oplus \cdots \oplus \lambda_k\sigma_k,\\
  B &=& \mu_1\sigma_1 \oplus \cdots \oplus \mu_k\sigma_k,%
\end{eqnarray*}
where $\lambda_1>\cdots>\lambda_k>0$, $\mu_1>\mu_2>\cdots>\mu_k>0$, and each $\sigma_i$ is an automorphism all of whose eigenvalues have absolute value one, and the converse statement holds under some restrictive condition on the periods of the periodic points of the projective maps. The problem which leads to this restriction is directly related to the problem of the classification of linear transformations on Euclidean space described in the first paragraph. However, for projective flows no such restrictions are necessary, since the complete classification of linear flows by Kuiper and Ladis is available. Combining this classification result with the ideas of \cite{Ku2}, we prove the following classification result for projective flows: Two projective flows induced by linear flows $\rme^{At}$ and $\rme^{Bt}$, where $A$ and $B$ are endomorphisms of the finite-dimensional real vector space $V$, are topologically conjugate if and only if we can write (with respect to individual linear coordinates)%
\begin{eqnarray*}
  A &=& (\lambda_1 \id + \sigma_1) \oplus (\lambda_2\id + \sigma_2) \oplus \cdots \oplus (\lambda_k\id + \sigma_k),\\
  B &=& (\mu_1 \id + \sigma_1) \oplus (\mu_2\id + \sigma_2) \oplus \cdots \oplus (\mu_k\id + \sigma_k),%
\end{eqnarray*}
with real numbers $\lambda_1 > \lambda_2 > \cdots > \lambda_k$ and $\mu_1 > \mu_2 > \cdots > \mu_k$, and endomorphisms $\sigma_1,\ldots,\sigma_k$ all of whose eigenvalues are located on the imaginary axis.%

The paper is organized as follows. In Section \ref{sec_constr}, we prove the direction of the classification result, which involves the construction of a topological conjugacy. Here we follow the lines of Kuiper's proof and adapt his arguments to the continuous-time case. One of the main ideas of this proof consists in the definition of a function on projective space which increases along certain trajectories and allows to define fundamental domains for the corresponding flows. Then the fundamental domain method can be applied to construct the topological conjugacy. In Section \ref{sec_dynprops}, several dynamical invariants of the projective flows are described in algebraic terms in order to prove the missing direction of the classification result, namely the finest Morse decomposition, the recurrent set, and the dimensions of the stable manifolds. Here we correct a mistake in Kuiper's proof whose formulas for the dimensions of the stable manifolds (in the discrete-time case) are not correct. In the final Section \ref{sec_mt} we explain how the main result follows from the partial results of the preceding sections.%

\section{Preliminaries}\label{sec_prelim}%

Let $\phi_1:\R\tm X\rightarrow X$ and $\phi_2:\R\tm Y\rightarrow Y$ be continuous flows on topological spaces $X$ and $Y$. A homeomorphism $h:X\rightarrow Y$ is called a \emph{topological conjugacy} from $\phi_1$ to $\phi_2$ if%
\begin{equation*}
  h(\phi_1(t,x)) = \phi_2(t,h(x)) \mbox{\quad for all } t\in\R,\ x\in X.%
\end{equation*}
If such $h$ exists, we say that $\phi_1$ and $\phi_2$ are \emph{topologically conjugate}.%

By $\End(V)$ we denote the space of all endomorphisms of a finite-dimensional real vector space $V$. Every $A\in\End(V)$ induces a linear flow on $V$ by%
\begin{equation*}
  \varphi_A(t,x) = \rme^{At}x,\quad \varphi_A:\R\tm V\rightarrow V.%
\end{equation*}
By $\P(V)$ we denote the projective space of $V$, that is, the quotient space of $V^* := V\backslash\{0\}$ with respect to the equivalence relation $v\sim w$ if and only if $w = \alpha v$ for some nonzero $\alpha\in\R$. Hence, the elements of $\P(V)$ are the lines through the origin in $V$ (minus the origin itself). Since each time-$t$-map $\varphi_A(t,\cdot)$ maps such lines onto such lines, the flow $\varphi_A$ induces a flow $\psi_A$ on $\P(V)$ which we call the \emph{projective flow} associated with $A$. The natural projection $\P:V^* \rightarrow \P(V)$, $x \mapsto \P x := [x]_{\sim}$, is a continuous surjection which satisfies%
\begin{equation*}
  \psi_A(t,\P x) = \P \rme^{At}x \mbox{\quad for all } t\in\R,\ x\in V^*.%
\end{equation*}
If two projective flows $\psi_A$ and $\psi_B$ are topologically conjugate, we write $A \cong_{\P} B$.%

If $W$ is a linear subspace of the vector space $V$, then $\P W := \{\P x : x\in W^*\}$ is called a projective subspace of $\P(V)$. More general, we use the notation $\P A = \{\P x : x\in A\backslash \{0\}\}$ for any subset $A\subset V$.%

\section{Construction of Conjugacies}\label{sec_constr}%

In this section, we prove the following theorem:%

\begin{theorem}\label{thm_projconj}
Assume that $A,B\in\End(V)$ can be written in the form%
\begin{eqnarray*}
  A &=& (\lambda_1 \id + \sigma_1) \oplus (\lambda_2\id + \sigma_2) \oplus \cdots \oplus (\lambda_k\id + \sigma_k),\\
  B &=& (\mu_1 \id + \sigma_1) \oplus (\mu_2\id + \sigma_2) \oplus \cdots \oplus (\mu_k\id + \sigma_k),%
\end{eqnarray*}
with real numbers $\lambda_1 > \lambda_2 > \cdots > \lambda_k$, $\mu_1 > \mu_2 > \cdots > \mu_k$, and endomorphisms $\sigma_1,\ldots,\sigma_k$ with eigenvalues lying on the imaginary axis. Then $A \cong_{\P} B$.%
\end{theorem}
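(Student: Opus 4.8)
The plan is to work on the unit sphere $\rmS(V)=\{x\in V:\|x\|=1\}$ with the induced flows $\psi_A^{\rmS}(t,x)=\rme^{At}x/\|\rme^{At}x\|$ and $\psi_B^{\rmS}$, since any conjugacy commuting with the antipodal map descends to $\P(V)$. Let $V=V_1\oplus\cdots\oplus V_k$ be the decomposition on which $A=\bigoplus_i(\lambda_i\id+\sigma_i)$ and $B=\bigoplus_i(\mu_i\id+\sigma_i)$, let $\Pi_i$ be the projection onto $V_i$, and set $\RC:=\bigcup_i\rmS(V_i)$. The first observation is that after normalization the scalar factors $\rme^{\lambda_i t}$ and $\rme^{\mu_i t}$ cancel, so on each block sphere $\rmS(V_i)$ both $\psi_A^{\rmS}$ and $\psi_B^{\rmS}$ reduce to the \emph{same} flow, the one generated by $\sigma_i$; hence the candidate conjugacy can be taken to be the identity on $\RC$. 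Because $\lambda_1>\cdots>\lambda_k$, the set $\RC$ is exactly the non-transient part: every orbit through a point with at least two nonzero block components has its $\alpha$- and $\omega$-limits in $\RC$, converging to $\rmS(V_k)$ as $t\to-\infty$ and to $\rmS(V_1)$ as $t\to+\infty$, and runs freely in between.

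Next I would construct a continuous strict Lyapunov function. The natural candidate is $F(x)=\sum_i\lambda_i\|\Pi_i x\|^2$ restricted to $\rmS(V)$; a direct computation shows that along $\psi_A^{\rmS}$ its derivative equals $2\big(\mathrm{Var}_w(\lambda)+\mathrm{Cov}_w(\lambda,s)\big)$, where $w_i=\|\Pi_i x\|^2$ and $s_i=\langle\Pi_i x,\sigma_i\Pi_i x\rangle/\|\Pi_i x\|^2$. The variance term is strictly positive off $\RC$ and dictated by the gaps $\lambda_i-\lambda_j$, but the covariance term coming from the (possibly non-normal) $\sigma_i$ has no definite sign. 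The key device is to replace the Euclidean inner product on each $V_i$ by an adapted Lyapunov inner product: since the eigenvalues of $\sigma_i$ lie on the imaginary axis, for every $\ep>0$ one can choose the inner product so that $|s_i|\le\ep$. A short estimate near $\RC$ gives $\mathrm{Var}_w(\lambda)\gtrsim g^2\,\delta$ and $|\mathrm{Cov}_w(\lambda,s)|\lesssim \ep\,\delta$, where $\delta$ is the total off-diagonal weight and $g$ the minimal spectral gap; choosing $\ep$ small enough (uniformly, as there are finitely many blocks) makes $F$ strictly increasing along every transient orbit. A level set $\Sigma_A:=F^{-1}(c)$ between two consecutive critical values then meets each transient orbit exactly once, so $\Sigma_A\times\R\to\rmS(V)\setminus\RC$, $(\sigma,t)\mapsto\psi_A^{\rmS}(t,\sigma)$, is a homeomorphism; this is the fundamental-domain structure. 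The same construction with the $\mu_i$ yields $\Sigma_B$ and the analogous trivialization of $\psi_B^{\rmS}$.

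With these trivializations in hand, I would choose the cross-sections so that $\Sigma_A$ and $\Sigma_B$ are identified by a homeomorphism $g$ compatible with the boundary dynamics (arranged to respect the block decomposition and to behave trivially at the ends), and define
\begin{equation*}
  h\big(\psi_A^{\rmS}(t,\sigma)\big):=\psi_B^{\rmS}(t,g(\sigma))\quad\text{on }\rmS(V)\setminus\RC,\qquad h:=\id\ \text{on }\RC.
\end{equation*}
By construction $h$ is a bijection intertwining the two flows on the transient part and on $\RC$ separately, and it commutes with the antipodal map, so it descends to the desired map on $\P(V)$. I expect the induction structuring this step to be an induction on $k$, adjoining the top block $V_1$ to the conjugacy already obtained on $\P(V_2\oplus\cdots\oplus V_k)$, where $A$ and $B$ restrict to maps satisfying the same hypotheses.

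The main obstacle is to prove that $h$ is a homeomorphism, i.e. that it is continuous with continuous inverse across $\RC$. Near a block sphere $\rmS(V_i)$ transient orbits spend arbitrarily long times, and the elapsed time needed to travel from the cross-section into a neighbourhood of $\rmS(V_i)$ differs for $\psi_A$ and $\psi_B$, since it is governed by the exponents $\lambda_i-\lambda_j$ rather than $\mu_i-\mu_j$; the definition of $h$ therefore conceals a nontrivial time reparametrization that must extend continuously to $\RC$. Controlling this requires uniform estimates on the normalized orbits $\rme^{\sigma_i t}x/\|\rme^{\sigma_i t}x\|$: because the spectrum of $\sigma_i$ is purely imaginary, $\log\|\rme^{\sigma_i t}x\|$ grows only like $\log|t|$ (polynomial Jordan growth), so the exponential separation between distinct blocks dominates and the approach to $\RC$ is uniformly tame. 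Making these estimates precise and uniform, and verifying that the reparametrization and the cross-section matching glue continuously along $\RC$, is where the real work lies and is the step most sensitive to the non-normality of the $\sigma_i$.
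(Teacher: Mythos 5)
There is a genuine gap, and it sits exactly where your proposal needs to do real work. Your global cross-section does not exist once $k\geq 3$: it is not true that every transient orbit runs from $\rmS(V_k)$ to $\rmS(V_1)$. An orbit through a point whose nonzero block components lie, say, only in $V_2\oplus V_3$ stays in $\rmS(V_2\oplus V_3)$ forever and limits on $\rmS(V_3)$ and $\rmS(V_2)$; along such an orbit your function $F=\sum_i\lambda_i\|\Pi_i x\|^2$ takes values only in $(\lambda_3,\lambda_2)$, while along an orbit with support in $V_1\oplus V_2$ it takes values only in $(\lambda_2,\lambda_1)$. Hence no single level $F^{-1}(c)$ meets every transient orbit, and the map $\Sigma_A\times\R\to\rmS(V)\setminus\RC$ is not surjective; the transient set is not a product over one cross-section. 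The second gap is the one you yourself flag but do not close: with an essentially arbitrary identification $g:\Sigma_A\to\Sigma_B$, the glued map $h$ will in general \emph{not} extend continuously to $\RC$, and no amount of polynomial-growth estimates on $\rme^{\sigma_i t}$ fixes a wrong choice of $g$. Continuity at the boundary is the theorem's actual content, not a technical afterthought.

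The paper's proof is engineered to dissolve both problems simultaneously, and it is worth seeing how. Instead of conjugating $\psi_A$ to $\psi_B$ in one stroke, it changes the spectrum in $k$ stages: at each stage only the real parts of the top $j$ blocks are shifted by a common $\gamma$ (Lemma \ref{lem_projconj}), with the constraint $\gamma+\lambda_j>\lambda_{j+1}$, and Theorem \ref{thm_projconj} follows by composing $k$ such conjugacies. Within one stage the relevant splitting is into just \emph{two} complementary subspaces $W=V_1\oplus\cdots\oplus V_j$ and $Z=V_{j+1}\oplus\cdots\oplus V_k$, and every orbit in $\P(V)\setminus(\P W\cup\P Z)$ does cross the single section $D=\beta^{-1}(0)$, $\beta=\ln\bigl(\sum_{i>j}\|x_i\|_A^2/\sum_{i\le j}\|x_i\|_A^2\bigr)$, exactly once (here the adapted norm of Lemma \ref{lem_adapatednorm} plays the role of your $|s_i|\le\ep$ device). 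Crucially, $D$ is a common cross-section for \emph{both} flows and the matching map is the identity, $h(\psi_A(t,p)):=\psi_B(t,p)$ for $p\in D$; since within a stage $A-B=-\gamma\id_W\oplus 0_Z$ commutes with everything, one gets the closed formula $h(\P x)=\P\bigl(\rme^{-\gamma\tau(\P x)}x_W\oplus x_Z\bigr)$, and it is this explicit expression that makes the continuity of the extension by the identity on $\P W\cup\P Z$ provable (note also that $\psi_A$ and $\psi_B$ literally coincide there, since a scalar shift acts trivially projectively). If you want to rescue your one-shot construction, you would have to stratify $\rmS(V)\setminus\RC$ by the pairs of Morse sets its orbits connect, build compatible sections on each stratum, and control continuity on the closures where strata accumulate on one another --- which is considerably harder than the paper's staged reduction, and is precisely what the staged reduction avoids.
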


We will conclude this theorem from the following lemma:%

\begin{lemma}\label{lem_projconj}
Let $\lambda_1 > \lambda_2 > \cdots > \lambda_k$, $j\in\{1,\ldots,k\}$, and $\gamma\in\R$ with%
\begin{equation}\label{eq_gammacondition}
  \gamma + \lambda_j > \lambda_{j+1}.%
\end{equation}
Then there exists a topological conjugacy from the projective flow corresponding to the endomorphism%
\begin{equation*}
  A := (\lambda_1\id + \sigma_1) \oplus \cdots \oplus (\lambda_k\id + \sigma_k)%
\end{equation*}
to the projective flow corresponding to%
\begin{eqnarray*}
  B := ((\gamma + \lambda_1)\id + \sigma_1) &\oplus& \cdots \oplus ((\gamma + \lambda_j)\id + \sigma_j)\\
                                            &\oplus& (\lambda_{j+1}\id + \sigma_{j+1}) \oplus \cdots \oplus (\lambda_k\id + \sigma_k).%
\end{eqnarray*}
\end{lemma}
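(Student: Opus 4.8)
The plan is to follow Kuiper's cross-section (fundamental-domain) method, after first reducing to a hyperbolic situation. Write $V = U\oplus W$ with $U := V_1\oplus\cdots\oplus V_j$ and $W := V_{j+1}\oplus\cdots\oplus V_k$ (if $j=k$ then $B=A+\gamma\id$ and $\psi_B=\psi_A$, so take $h=\id$; hence assume $j<k$). Since a projective flow is unchanged when its generator is modified by a scalar summand $-\beta\id$ (the factor $\rme^{-\beta t}$ cancels under $\P$), I may replace $A,B$ by $A-\beta\id, B-\beta\id$. The hypothesis $\gamma+\lambda_j>\lambda_{j+1}$ makes the interval $(\lambda_{j+1},\min\{\lambda_j,\gamma+\lambda_j\})$ nonempty; choosing $\beta$ in it turns both generators into hyperbolic endomorphisms with unstable subspace $U$ and stable subspace $W$. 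After this reduction $B|_W=A|_W$ and $B|_U=A|_U+\gamma\id_U$, so for $(u,w)\in U\oplus W$ one has $\rme^{At}(u,w)=(\rme^{At}u,\rme^{At}w)$ and $\rme^{Bt}(u,w)=(\rme^{\gamma t}\rme^{At}u,\rme^{At}w)$, where $\rme^{At}$ denotes also the restricted flows on $U$ and $W$. In particular $\psi_A$ and $\psi_B$ agree on the invariant sets $\P U$ and $\P W$ and differ only on the transition region $R:=\P(V)\setminus(\P U\cup\P W)$.

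Next I would build a strict Lyapunov function on $R$. Choosing adapted inner products $\|\cdot\|_U,\|\cdot\|_W$ for the hyperbolic flow (whose unstable and stable rates are $\lambda_j-\beta$ and $\lambda_{j+1}-\beta$), the scaling-invariant function $g_A(\P(u,w)):=\log\|u\|_U-\log\|w\|_W$ increases along every $\psi_A$-orbit in $R$ at a rate $\ge r$, where $r$ can be taken arbitrarily close to $(\lambda_j-\beta)+(\beta-\lambda_{j+1})=\lambda_j-\lambda_{j+1}$. Because $\gamma>\lambda_{j+1}-\lambda_j$, I can fix such an $r$ with $\gamma+r>0$. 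Then $g_A$ is strictly increasing and surjective along each $\psi_A$-orbit, so $\Sigma_A:=g_A^{-1}(0)$ is a global cross-section and $(\xi,t)\mapsto\psi_A(t,\xi)$ is a homeomorphism $\Sigma_A\tm\R\to R$. The decisive observation is that the extra factor $\rme^{\gamma t}$ on $U$ only improves the rate, $\frac{d}{dt}g_A(\psi_B(t,p))\ge\gamma+r>0$, so the very same set $\Sigma_A$ is also a global cross-section for $\psi_B$. Composing the $\psi_B$-flow-out homeomorphism with the inverse of the $\psi_A$-one yields a homeomorphism $h$ of $R$ satisfying $h\circ\psi_A(t,\cdot)=\psi_B(t,\cdot)\circ h$; explicitly $h(\P(u,w))=\P(u,\rme^{-\gamma\tau_A(\P(u,w))}w)$, where $\tau_A(p)$ is the time needed to flow $p$ back to $\Sigma_A$.

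The step I expect to be the main obstacle is extending $h$ continuously by the identity across $\P U$ and $\P W$ (where $\psi_A=\psi_B$), and this is exactly where the hypothesis is consumed. As $p\to\P U$ in $R$ one has $\tau_A(p)\to+\infty$, and as $p\to\P W$ one has $\tau_A(p)\to-\infty$; since $h$ fixes the directions $\P u,\P w$ and only rescales the ratio of the two components, it suffices to control $g_A(h(p))=g_A(p)+\gamma\tau_A(p)$. From $|g_A(p)|\ge r|\tau_A(p)|$ (with the same sign) and $\gamma+r>0$ one obtains $g_A(h(p))\to+\infty$ as $p\to\P U$ and $g_A(h(p))\to-\infty$ as $p\to\P W$, forcing $h(p)$ toward the same boundary point as $p$. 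Thus $h$ extends to a continuous bijection of the compact Hausdorff space $\P(V)$ — hence a homeomorphism — conjugating $\psi_A$ to $\psi_B$ everywhere (on $\P U\cup\P W$ both flows coincide and $h=\id$). The remaining points are routine: existence of adapted norms with the stated rates for hyperbolic flows, allowing for the nilpotent and purely imaginary parts of the $\sigma_i$, and continuity of the first-hitting-time $\tau_A$.
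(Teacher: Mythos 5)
Your proposal is correct and is essentially the paper's own argument: an adapted norm (Lemma \ref{lem_adapatednorm}) producing a log-ratio Lyapunov function (your $g_A$ equals $-\tfrac{1}{2}\beta$ in the paper's notation, and your $U,W$ are the paper's $W,Z$), whose zero level set is a common fundamental domain for $\psi_A$ and $\psi_B$, the flow-out conjugacy $h(p)=\P(u\oplus\rme^{-\gamma\tau_A(p)}w)$ for $p=\P(u\oplus w)$ (the paper's formula \eqref{eq_cxdescr}, up to the sign convention for the hitting time), and extension by the identity across the two invariant projective subspaces, with the hypothesis $\gamma+\lambda_j>\lambda_{j+1}$ consumed exactly where you consume it, since your two facts $|g_A(p)|\ge r|\tau_A(p)|$ (with equal signs) and $\gamma+r>0$ are precisely the paper's Step 4 estimates $\alpha(p_n)\le\rme^{-2rt_n}$ and $\rme^{-2\gamma t_n}\alpha(p_n)\le\rme^{2(\lambda_{j+1}-(\gamma+\lambda_j)+2\delta)t_n}\rightarrow 0$ in different notation. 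Your preliminary reduction to a hyperbolic generator by subtracting a suitable multiple of the identity, and the scalar bookkeeping $g_A(h(p))=g_A(p)+\gamma\tau_A(p)$, merely streamline the paper's Step 4 computation rather than constituting a different method.
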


Indeed, assume that Lemma \ref{lem_projconj} is true. Then Theorem \ref{thm_projconj} is proved as follows.%

\begin{proof}[of Theorem~{\rm\ref{thm_projconj}}]
Define real numbers $\gamma_1,\ldots,\gamma_k$ recursively by%
\begin{eqnarray*}
  \mu_1 &=& \lambda_1 + \sum_{i=1}^k\gamma_i,\allowdisplaybreaks\\
  \mu_2 &=& \lambda_2 + \sum_{i=2}^k\gamma_i,\allowdisplaybreaks\\
        &\vdots& \allowdisplaybreaks\\
  \mu_{k-1} &=& \lambda_{k-1} + \gamma_{k-1} + \gamma_k,\allowdisplaybreaks\\
  \mu_k &=& \lambda_k + \gamma_k,%
\end{eqnarray*}
or briefly,%
\begin{equation*}
  \mu_j - \lambda_j = \sum_{i=j}^k\gamma_i,\qquad j=1,\ldots,k.%
\end{equation*}
Then Lemma \ref{lem_projconj} implies%
\begin{eqnarray*}
  (\lambda_1\id \!&+&\! \sigma_1) \oplus \cdots \oplus (\lambda_k\id + \sigma_k)\allowdisplaybreaks\\
  &\cong_{\P}& ((\gamma_k + \lambda_1)\id + \sigma_1) \oplus \cdots \oplus ((\gamma_k + \lambda_k)\id + \sigma_k)\allowdisplaybreaks\\
  &=& ((\gamma_k + \lambda_1)\id + \sigma_1) \oplus \cdots \oplus ((\gamma_k + \lambda_{k-1})\id + \sigma_{k-1}) \oplus (\mu_k\id + \sigma_k)\allowdisplaybreaks\\
  &\cong_{\P}& ((\gamma_{k-1}+\gamma_k + \lambda_1)\id + \sigma_1)\oplus \cdots \oplus((\gamma_{k-1}+\gamma_k+\lambda_{k-1})\id + \sigma_{k-1})\allowdisplaybreaks\\
  && \oplus (\mu_k\id + \sigma_k)\allowdisplaybreaks\\
  &=& ((\gamma_{k-1}+\gamma_k+\lambda_1)\id + \sigma_1) \oplus \cdots \oplus (\mu_{k-1}\id + \sigma_{k-1}) \oplus (\mu_k\id + \sigma_k)\allowdisplaybreaks\\
  &\vdots&\allowdisplaybreaks\\
  &\cong_{\P}& \left(\left(\sum_{i=1}^k\gamma_i + \lambda_1\right)\id + \sigma_1\right) \oplus (\mu_2\id + \sigma_2) \oplus \cdots \oplus (\mu_k\id + \sigma_k)\allowdisplaybreaks\\
  &=& (\mu_1\id + \sigma_1) \oplus \cdots \oplus (\mu_k\id + \sigma_k).%
\end{eqnarray*}
Note that condition \eqref{eq_gammacondition} is satisfied, which here reads%
\begin{equation*}
  \gamma_j + \left(\sum_{i=j+1}^k\gamma_i + \lambda_j\right) = \sum_{i=j}^k\gamma_i + \lambda_j = \mu_j > \mu_{j+1},%
\end{equation*}
and holds by assumption.%
\end{proof}

In order to prove Lemma \ref{lem_projconj}, we also use the following lemma.%

\begin{lemma}\label{lem_adapatednorm}
Let $A$ be as in Lemma \ref{lem_projconj} and $j\in\{1,\ldots,k\}$. Then for each $\delta>0$ there exists a norm $\|\cdot\|_A$ on $V$ which satisfies%
\begin{equation*}
  \|x\|_A^2 = \|x_1\|_A^2 + \cdots + \|x_k\|_A^2,%
\end{equation*}
where $x_1,\ldots,x_k$ are the components of $x$ with respect to the decomposition $\sigma_1\oplus\cdots\oplus\sigma_k$, and such that%
\begin{eqnarray*}
  \|\rme^{\sigma_i t}x_i\|_A &\geq& \rme^{-\delta t}\|x_i\|_A,\quad i=1,\ldots,j,\\
  \|\rme^{\sigma_i t}x_i\|_A &\leq& \rme^{\delta t}\|x_i\|_A,\quad i=j+1,\ldots,k,%
\end{eqnarray*}
for all $t\geq0$ and $x_i$ in the invariant subspace $V_i$ corresponding to $\sigma_i$.%
\end{lemma}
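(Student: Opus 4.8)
The plan is to reduce everything to a single block and to build an adapted (Lyapunov) inner product there. Concretely, I would construct a norm $\|\cdot\|_{A,i}$ on each invariant subspace $V_i$ and then define the global norm by declaring the blocks orthogonal, i.e.
\[
  \|x\|_A^2 := \sum_{i=1}^k \|x_i\|_{A,i}^2 .
\]
With this definition the required Pythagorean identity $\|x\|_A^2 = \sum_i \|x_i\|_A^2$ holds automatically, and for $x_i\in V_i$ one has $\|x_i\|_A = \|x_i\|_{A,i}$. Since each $V_i$ is invariant under the flow and $\rme^{\sigma_i t}$ restricted to $V_i$ is measured entirely by $\|\cdot\|_{A,i}$, both families of inequalities follow block by block from the single two-sided estimate
\[
  \rme^{-\delta t}\|x_i\|_{A,i} \;\le\; \|\rme^{\sigma_i t}x_i\|_{A,i} \;\le\; \rme^{\delta t}\|x_i\|_{A,i},
  \qquad t\ge 0,\ x_i\in V_i.
\]
Note that this is in fact stronger than what is asked (the lemma needs only the lower bound for $i\le j$ and the upper bound for $i>j$), so the splitting at the index $j$ will play no role in the construction.

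It remains to produce $\|\cdot\|_{A,i}$. I would start from the additive Jordan--Chevalley decomposition $\sigma_i = S_i + N_i$, with $S_i$ semisimple, $N_i$ nilpotent, and $[S_i,N_i]=0$, so that $\rme^{\sigma_i t} = \rme^{S_i t}\rme^{N_i t}$. Because the eigenvalues of $\sigma_i$ lie on the imaginary axis, those of $S_i$ are purely imaginary; hence $\{\rme^{S_i t}:t\in\R\}$ has compact closure $G$ in $\mathrm{GL}(V_i)$, and averaging an arbitrary inner product over the compact abelian group $G$ with respect to Haar measure yields an inner product $\langle\cdot,\cdot\rangle_0$ under which every $\rme^{S_i t}$ is orthogonal, equivalently $S_i$ is skew-symmetric. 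In particular the symmetric part of $\sigma_i$ for $\langle\cdot,\cdot\rangle_0$ coincides with that of $N_i$.

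The crux, and the step I expect to require the most care, is to modify $\langle\cdot,\cdot\rangle_0$ so that the symmetric part of $\sigma_i$ becomes small in operator norm while $S_i$ stays skew. Since $\ker N_i^m$ is $S_i$-invariant and $S_i$ is skew, the orthogonal complement of $\ker N_i^{m-1}$ inside $\ker N_i^m$ is again $S_i$-invariant; calling it $W_m$ gives a mutually orthogonal, $S_i$-invariant grading $V_i=\bigoplus_\ell W_\ell$ with $N_i(W_\ell)\subseteq\bigoplus_{\ell'<\ell}W_{\ell'}$. I would then rescale each $W_\ell$ by a geometric factor $\rho^\ell$ with $0<\rho<1$, obtaining a new inner product $\langle\cdot,\cdot\rangle_{A,i}$: as this rescaling is a positive scalar on each orthogonal $S_i$-invariant block, it leaves $S_i$ skew, whereas a matrix entry of $N_i$ from level $\ell$ to level $\ell'<\ell$ is multiplied by $\rho^{\ell-\ell'}\le\rho$, so $\|N_i\|_{A,i}$, and hence the norm of its symmetric part $\tfrac12(N_i+N_i^{*})$ (which equals that of $\sigma_i$), is at most $C\rho$. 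Choosing $\rho$ small enough that $C\rho\le\delta$, the quadratic form $q(t):=\|\rme^{\sigma_i t}x_i\|_{A,i}^2$ satisfies
\[
  |q'(t)| = 2\,\bigl|\langle \sigma_i\,\rme^{\sigma_i t}x_i,\ \rme^{\sigma_i t}x_i\rangle_{A,i}\bigr| \;\le\; 2\delta\, q(t),
\]
and integrating this differential inequality from $0$ to $t\ge 0$ gives $\rme^{-2\delta t}q(0)\le q(t)\le \rme^{2\delta t}q(0)$. Taking square roots yields the displayed per-block estimate, which combined with the orthogonal-sum definition of $\|\cdot\|_A$ completes the proof.
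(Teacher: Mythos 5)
Your proof is correct, but its core differs from the paper's. Both arguments share the same skeleton: build a norm $\|\cdot\|_{A,i}$ on each invariant block $V_i$ and declare the blocks mutually orthogonal, so the Pythagorean identity is automatic and everything reduces to a per-block estimate. The paper obtains that estimate by keeping one fixed Euclidean norm, using the polynomial growth of $\rme^{\sigma_i t}$ to get $\|\rme^{\sigma_i t}x_i\|\leq c_i\rme^{\frac{2}{3}\delta t}\|x_i\|$, shifting the generator to $\sigma_i-\frac{4}{3}\delta\id$ (respectively $-\sigma_i-\frac{4}{3}\delta\id$ for $i\leq j$, which is where forward versus backward time and the index $j$ enter), and then invoking the adapted-norm theorem cited from Robinson to absorb the constant $c_i$ before undoing the shift. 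You instead construct the inner product explicitly: real Jordan--Chevalley decomposition $\sigma_i=S_i+N_i$, Weyl's averaging trick over the compact closure of $\{\rme^{S_i t}:t\in\R\}$ to make $S_i$ skew-symmetric, the standard Jordan-block rescaling along the kernel filtration of $N_i$ to make $\|N_i\|$ (hence the symmetric part of $\sigma_i$) at most $\delta$, and a Gronwall-type differential inequality. This buys a self-contained argument with no external citation, a transparent picture of where the estimate comes from, and the stronger two-sided bound on every block, so the splitting index $j$ plays no role; the paper's route is shorter granted the reference. One caution on wording: your rescaling must be read as replacing an orthonormal basis of $W_\ell$ by $\rho^\ell$ times itself and declaring the result orthonormal (equivalently, the new norm on $W_\ell$ is $\rho^{-\ell}$ times the old one); if one instead literally sets the new norm equal to $\rho^\ell$ times the old norm on $W_\ell$, then the components of $N_i$ mapping level $\ell$ to level $\ell'<\ell$ are multiplied by $\rho^{\ell'-\ell}>1$ and blow up. Since the factor you actually assert, $\rho^{\ell-\ell'}\leq\rho$, is the one produced by the correct choice, this is a matter of phrasing rather than a gap.
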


\begin{proof}
Let $\|\cdot\|$ be a fixed Euclidean norm on $V$. Since $\rme^{\sigma_it}$ has polynomial growth, for each $i\in\{j+1,\ldots,k\}$ there exists a constant $c_i=c_i(\delta)>0$ with%
\begin{equation*}
  \left\|\rme^{(\lambda_i\id + \sigma_i)t}x_i\right\| \leq c_i\rme^{(\lambda_i + \frac{2}{3}\delta)t}\|x_i\| \mbox{\quad for all\ } x_i\in V_i,\ t\geq0.%
\end{equation*}
Multiplication by $\rme^{(-\lambda_i-\frac{4}{3}\delta)t}$ gives%
\begin{equation*}
  \left\|\rme^{(-\frac{4}{3}\delta\id + \sigma_i)t}x_i\right\| \leq c_i\rme^{-\frac{2}{3}\delta t}\|x_i\| \mbox{\quad for all\ } x_i\in V_i,\ t\geq0.%
\end{equation*}
There exists an adapted norm $\|\cdot\|_{A,i}$ on $V_i$ such that (cf.~Robinson \cite{Rob})%
\begin{equation*}
  \left\|\rme^{(-\frac{4}{3}\delta\id + \sigma_i)t}x_i\right\|_{A,i} \leq \rme^{-\frac{1}{3}\delta t}\|x_i\|_{A,i} \mbox{\quad for all\ } x_i\in V_i,\ t\geq0.%
\end{equation*}
Multiplication by $\rme^{\frac{4}{3}\delta t}$ now yields%
\begin{equation*}
  \left\|\rme^{\sigma_it}x_i\right\|_{A,i} \leq \rme^{\delta t}\|x_i\|_{A,i}  \mbox{\quad for all\ } x_i\in V_i,\ t\geq0.%
\end{equation*}
Analogously, for $i\in\{1,\ldots,j\}$ there is a constant $c_i>0$ with%
\begin{equation*}
  \left\|\rme^{-(\lambda_i\id + \sigma_i)t}x_i\right\| \leq c_i\rme^{(-\lambda_i + \frac{2}{3}\delta)t}\|x_i\| \mbox{\quad for all\ } x_i\in V_i,\ t\geq0.%
\end{equation*}
Multiplication by $\rme^{(\lambda_i-\frac{4}{3}\delta)t}$ yields%
\begin{equation*}
  \left\|\rme^{(-\frac{4}{3}\delta\id - \sigma_i)t}x_i\right\| \leq c_i\rme^{-\frac{2}{3}\delta t}\|x_i\| \mbox{\quad for all\ } x_i\in V_i,\ t\geq0.%
\end{equation*}
Again, there is an adapted norm $\|\cdot\|_{A,i}$ on $V_i$ satisfying%
\begin{equation*}
  \left\|\rme^{(-\frac{4}{3}\delta\id - \sigma_i)t}x_i\right\|_{A,i} \leq \rme^{-\frac{1}{3}\delta t}\|x_i\|_{A,i} \mbox{\quad for all\ } x_i\in V_i,\ t\geq0.%
\end{equation*}
Multiplication by $\rme^{\frac{4}{3}\delta t}$ and replacing $x_i$ by $\rme^{\sigma_i t}x_i$ leads to%
\begin{equation*}
  \|\rme^{\sigma_it}x_i\|_{A,i} \geq \rme^{-\delta t}\|x_i\|_{A,i} \mbox{\quad for all\ } x_i\in V_i,\ t\geq0.%
\end{equation*}
Now it is easy to see that the desired norm is given by%
\begin{equation*}
  \|x\|_A := \left(\sum_{i=1}^k\|x_i\|_{A,i}^2\right)^{1/2} \mbox{\quad for all\ } x\in V.%
\end{equation*}
This finishes the proof.%
\end{proof}

\begin{proof}[of Lemma~{\rm\ref{lem_projconj}}]
The proof proceeds in four steps.%

\emph{Step 1.} We make some definitions: Let $x_1,\ldots,x_k$ denote the components of a vector $x\in V^*$ with respect to the decomposition $\sigma_1\oplus\cdots\oplus\sigma_k$. Let $\delta>0$ be chosen small enough such that%
\begin{equation}\label{eq_deltacondition}
  \max\left\{\lambda_{j+1}-\lambda_j+2\delta,\lambda_{j+1}-(\gamma+\lambda_j)+2\delta\right\} < 0.%
\end{equation}
This is possible, since $\lambda_j>\lambda_{j+1}$ and $\gamma + \lambda_j>\lambda_{j+1}$. For the chosen $\delta$, let $\|\cdot\|_A$ be a corresponding adapted norm as in Lemma \ref{lem_adapatednorm}, and define a function%
\begin{equation*}
  \alpha:\P(V) \rightarrow [0,\infty],\qquad p = \P x \mapsto \frac{\sum_{i=j+1}^k\|x_i\|_A^2}{\sum_{i=1}^j\|x_i\|_A^2}.%
\end{equation*}
It is easy to see that $\alpha$ is well-defined and continuous. We further define%
\begin{equation*}
  \beta:\P(V) \rightarrow [-\infty,\infty],\qquad p \mapsto \ln\alpha(p).%
\end{equation*}
Consider the complementary linear subspaces%
\begin{eqnarray*}
  W &:=& \left\{x\in V\ :\ x_{j+1} = \ldots = x_k = 0\right\},\\
  Z &:=& \left\{x\in V\ :\ x_1 = \ldots = x_j = 0\right\},%
\end{eqnarray*}
and the corresponding projective subspaces $\P W$ and $\P Z$. Obviously,%
\begin{equation*}
  \P W = \beta^{-1}(-\infty) \mbox{\quad and\quad } \P Z = \beta^{-1}(\infty).%
\end{equation*}
For brevity in notation we write%
\begin{equation*}
  \P(V)^* := \P(V) \backslash (\P W \cup \P Z).%
\end{equation*}

\emph{Step 2.} We analyze the behavior of the projective flows $\psi_A$ and $\psi_B$ on $\P(V)^*$: Take $x\in V\backslash (W\cup Z)$, that is, $x = x_W \oplus x_Z$ with $x_W \in W^*$ and $x_Z \in Z^*$, and let $t\geq0$. Then%
\begin{eqnarray*}
  \alpha(\P\rme^{At}x) &=& \alpha\left(\P(\rme^{At}x_W + \rme^{At}x_Z)\right) = \frac{\|\rme^{At}x_Z\|_A^2}{\|\rme^{At}x_W\|_A^2}\\
  &=& \frac{\sum_{i=j+1}^k\rme^{2\lambda_it}\|\rme^{\sigma_it}x_i\|_A^2}{\sum_{i=1}^j\rme^{2\lambda_it}\|\rme^{\sigma_it}x_i\|_A^2}\\
  &\leq& \frac{\rme^{2\lambda_{j+1}t}\sum_{i=j+1}^k\rme^{2(\lambda_i-\lambda_{j+1})t}\rme^{2\delta t}\|x_i\|_A^2}{\rme^{2\lambda_jt}\sum_{i=1}^j\rme^{2(\lambda_i-\lambda_j)t}\rme^{-2\delta t}\|x_i\|_A^2}\\
  &=& \rme^{2(\lambda_{j+1}-\lambda_j + 2\delta)t}\frac{\sum_{i=j+1}^k \rme^{2(\lambda_i-\lambda_{j+1})t}\|x_i\|_A^2}{\sum_{i=1}^j\rme^{2(\lambda_i-\lambda_j)t}\|x_i\|_A^2}.%
\end{eqnarray*}
Since $\lambda_i - \lambda_{j+1}\leq 0$ for $i\geq j+1$ and $\lambda_i-\lambda_j\geq0$ for $i\leq j$, we obtain%
\begin{equation}\label{eq_alphaineq}
  \alpha(\P\rme^{At}x) \leq \rme^{2(\lambda_{j+1}-\lambda_j+2\delta)t}\frac{\sum_{i=j+1}^k\|x_i\|_A^2}{\sum_{i=1}^j\|x_i\|_A^2} = \rme^{2(\lambda_{j+1}-\lambda_j+2\delta)t}\alpha(\P x),%
\end{equation}
or equivalently,%
\begin{equation}\label{eq_betadecrftime}
  \beta(\P\rme^{At}x) \leq \beta(\P x) + 2(\lambda_{j+1} - \lambda_j + 2\delta)t \mbox{\quad for all } t\geq0.%
\end{equation}
This inequality holds for all $x\in V\backslash(W\cup Z)$ and hence we can replace $x$ by $\rme^{-At}x$, which yields%
\begin{equation}\label{eq_betadecrbtime}
  \beta(\P\rme^{At}x) \geq \beta(\P x) + 2(\lambda_{j+1} - \lambda_j + 2\delta)t \mbox{\quad for all } t\leq 0.%
\end{equation}
By \eqref{eq_deltacondition} we have $\lambda_{j+1}-\lambda_j+2\delta<0$. Hence, $\beta$ is strictly decreasing along the trajectory through $\P x$ and \eqref{eq_betadecrftime}, \eqref{eq_betadecrbtime} imply%
\begin{equation*}
  \beta(\P\rme^{At}x) \rightarrow \mp\infty \mbox{\quad for } t\rightarrow \pm \infty.%
\end{equation*}
Analogously, for $B$ one shows that%
\begin{equation}\label{eq_betadecrftime_B}
  \beta(\P\rme^{Bt}x) \leq \beta(\P x) + 2(\lambda_{j+1} - (\gamma+\lambda_j) + 2\delta)t \mbox{\quad for all\ } t\geq 0,%
\end{equation}
and from \eqref{eq_deltacondition} it follows that the trajectories of $\psi_B|_{\P(V)^*}$ have the same limit behavior as those of $\psi_A|_{\P(V)^*}$. In forward time they converge to $\P W$ and in backward time to $\P Z$.%

\emph{Step 3.} Using the fundamental domain method, we show that $\psi_A$ and $\psi_B$ are topologically conjugate on $\P(V)^*$: We can define a fundamental domain for both $\psi_A|_{\P(V)^*}$ and $\psi_B|_{\P(V)^*}$ by $D := \beta^{-1}(0)$. Then every trajectory of $\psi_A|_{\P(V)^*}$ and $\psi_B|_{\P(V)^*}$ intersects $D$ in exactly one point, which follows from \eqref{eq_betadecrftime} and \eqref{eq_betadecrftime_B}. Define a topological conjugacy by%
\begin{equation*}
  h:\P(V)^* \rightarrow \P(V)^*,\qquad h(\psi_A(t,p)) := \psi_B(t,p) \mbox{\quad for all\ } t\in\R,\ p\in D.%
\end{equation*}
Since $D$ is a fundamental domain, $h$ is well-defined. Obviously, $h$ is invertible; its inverse maps $\psi_B(t,p)$ to $\psi_A(t,p)$ for $t\in\R$ and $p\in D$. An explicit expression for $h$ is%
\begin{equation}\label{eq_hexplicit}
  h(p) = \psi_B(-\tau(p),\psi_A(\tau(p),p)),%
\end{equation}
where $\tau:\P(V)^* \rightarrow \R$ is defined implicitly by the equation%
\begin{equation*}
  \beta(\psi_A(\tau(p),p)) = 0.%
\end{equation*}
To show that $h$ is continuous, it hence suffices to prove continuity of $\tau$. To this end, assume that $\tau$ is not continuous at $p\in\P(V)^*$. Then there exist $\ep>0$ and a sequence $(p_n)_{n\geq1}$ converging to $p$ such that $|\tau(p_n)-\tau(p)|\geq\ep$ for all $n\geq1$. We can choose a subsequence $(p_{m_n})$ such that $\tau(p_{m_n})$ either converges to a real number $\tau^*$, to $\infty$, or to $-\infty$. For $\tau(p_{m_n})\rightarrow\pm\infty$ we would have $\psi_A(\tau(p_{m_n}),p_{m_n})\rightarrow \P W \cup \P Z$. This is not possible, since $\beta(\psi_A(\tau(p_{m_n}),p_{m_n}))=0$ for all $n$. If $\tau(p_{m_n}) \rightarrow \tau^*\in\R$, then%
\begin{equation*}
  0 = \lim_{n\rightarrow\infty}\beta(\psi_A(\tau(p_{m_n}),p_{m_n})) = \beta(\psi_A(\tau^*,p)) \quad \Rightarrow \quad \tau^* = \tau(p),%
\end{equation*}
in contradiction to $|\tau(p_{m_n})-\tau(p)|\geq\ep$. The conjugacy identity easily follows from the definition of $h$.%

\emph{Step 4.} We show that $h$ can be extended to a conjugacy on $\P(V)$. To this end, we define%
\begin{equation*}
  \overline{h}(p) := \left\{\begin{array}{cc}
                                  h(p) & \mbox{for } p\in\P(V)^*,\\
                                    p  & \mbox{for } p\in \P W \cup \P Z.%
                            \end{array}\right.%
\end{equation*}
Then $\overline{h}$ is bijective, continuous on $\P(V)^*$ and on $\P W \cup \P Z$. Moreover, it satisfies the conjugacy identity, since on $\P W \cup \P Z$ the flows $\psi_A$ and $\psi_B$ coincide. It remains to prove that $\overline{h}$ is continuous on $\P(V)$. To this end, consider the explicit expression \eqref{eq_hexplicit} for $h$ and pick $x = x_W \oplus x_Z \in V\backslash (W\cup Z)$. Then%
\begin{eqnarray*}
  h(\P x) &=& \psi_B(-\tau(\P x),\psi_A(\tau(\P x),\P x))\\
          &=& \psi_B\left(-\tau(\P x),\P\rme^{\tau(\P x)A}(x_W \oplus x_Z)\right)\\
          &=& \P\underbrace{\rme^{-\tau(\P x)B}\rme^{\tau(\P x)A}}_{=:C(x)}(x_W \oplus x_Z) = \P(C(x)x_W \oplus C(x)x_Z).%
\end{eqnarray*}
Since $A$ and $B$ commute, we have $C(x) = \rme^{\tau(\P x)(A-B)}$ and hence%
\begin{equation}\label{eq_cxdescr}
  C(x) = \rme^{-\gamma \tau(\P x)}\id_W \oplus \id_Z \quad \Rightarrow \quad h(\P x) = \P(\rme^{-\gamma\tau(\P x)}x_W \oplus x_Z).%
\end{equation}
Now let $(p_n)_{n\geq1}$ be a sequence in $\P(V)^*$ converging to some $p\in\P W$ (without loss of generality). To show that $h(p_n)\rightarrow p = \overline{h}(p)$, we write%
\begin{equation*}
  p_n = \P\rme^{At_n}x_n,\quad t_n\in\R,\ \P x_n \in D = \beta^{-1}(0)%
\end{equation*}
and $x_n = x_{W,n} \oplus x_{Z,n}$ with $x_{W,n}\in W^*$, $x_{Z,n}\in Z^*$. Obviously, we may assume that $t_n>0$. Then we have $t_n\rightarrow\infty$, which is proved as follows: We have $\alpha(p_n)=\rme^{\beta(p_n)} \rightarrow 0$ and%
\begin{eqnarray*}
  \alpha(p_n) &=& \frac{\|\rme^{At_n}x_{Z,n}\|_A^2}{\|\rme^{At_n}x_{W,n}\|_A^2} \geq \frac{(c_1\rme^{(\lambda_k-\ep)t_n})^2\|x_{Z,n}\|_A^2}{(c_2\rme^{(\lambda_1+\ep)t_n})^2\|x_{W,n}\|_A^2}\\
  &=& \left[\frac{c_1}{c_2}\rme^{(\lambda_k-\lambda_1-2\ep)t_n}\right]^2 \underbrace{\alpha(\P x_n)}_{=1}%
\end{eqnarray*}
with small $\ep>0$ and constants $c_1,c_2>0$, which implies $(\lambda_k-\lambda_1-2\ep)t_n \rightarrow -\infty$ and therefore $t_n\rightarrow\infty$. We also have%
\begin{equation*}
  \tau(p_n) = \tau(\P\rme^{At_n}x_n) = -t_n,%
\end{equation*}
and hence \eqref{eq_cxdescr} yields%
\begin{eqnarray*}
  h(p_n) &=& \P\left(\rme^{\gamma t_n}\rme^{At_n}x_{W,n} \oplus \rme^{At_n}x_{Z,n}\right)\\
         &=& \P\left(\frac{\rme^{At_n}x_{W,n}}{\|\rme^{At_n}x_n\|_A} + \frac{\rme^{At_n}x_{Z,n}}{\rme^{\gamma t_n}\|\rme^{At_n}x_n\|_A}\right)\\
         &=& \P\left(\frac{\rme^{At_n}x_n}{\|\rme^{At_n}x_n\|_A} + \left[\frac{\rme^{At_n}x_{Z,n}}{\rme^{\gamma t_n}\|\rme^{At_n}x_n\|_A}-\frac{\rme^{At_n}x_{Z,n}}{\|\rme^{At_n}x_n\|_A}\right]\right).%
\end{eqnarray*}
Observe that%
\begin{eqnarray*}
  \left\|\frac{\rme^{At_n}x_{Z,n}}{\|\rme^{At_n}x_n\|_A}\right\|_A^2 &=& \frac{\|\rme^{At_n}x_{Z,n}\|_A^2}{\|\rme^{At_n}x_n\|_A^2} = \frac{\|\rme^{At_n}x_{Z,n}\|_A^2}{\|\rme^{At_n}x_{W,n}\|_A^2 + \|\rme^{At_n}x_{Z,n}\|_A^2}\\
  &=& \left(\alpha(p_n)^{-1} + 1\right)^{-1} = \frac{\alpha(p_n)}{1+\alpha(p_n)} \rightarrow 0.%
\end{eqnarray*}
Moreover, we have%
\begin{equation*}
  \left\|\frac{\rme^{At_n}x_{Z,n}}{\rme^{\gamma t_n}\|\rme^{At_n}x_n\|_A}\right\|_A^2 = \frac{\rme^{-2\gamma t_n}\alpha(p_n)}{1+\alpha(p_n)} \rightarrow 0,%
\end{equation*}
since by \eqref{eq_alphaineq}%
\begin{eqnarray*}
  \rme^{-2\gamma t_n}\alpha(p_n) &=& \rme^{-2\gamma t_n}\alpha(\P\rme^{At_n}x_n)\\
  &\leq& \rme^{-2\gamma t_n}\rme^{2(\lambda_{j+1}-\lambda_j+2\delta)t_n}\underbrace{\alpha(\P x_n)}_{=1}\\
  &=& \rme^{2(\lambda_{j+1}-(\gamma+\lambda_j)+2\delta)t_n} \rightarrow 0.%
\end{eqnarray*} 
This gives%
\begin{equation*}
  \lim_{n\rightarrow\infty} h(p_n) = \lim_{n\rightarrow\infty} \P\left(\frac{\rme^{At_n}x_n}{\|\rme^{At_n}x_n\|_A}\right) = p,%
\end{equation*}
which concludes the proof.%
\end{proof}

\section{Dynamical Invariants of Projective Flows}\label{sec_dynprops}%

In this section, we describe several dynamical invariants of projective flows in algebraic terms.%

\subsection{The Finest Morse Decomposition}%

We start by giving a description of the finest Morse decomposition of a projective flow. Let us first recall some notions.%

\begin{definition}
Let $\varphi:\R\tm X\rightarrow X$ be a continuous flow on a metric space $X$. The \emph{$\alpha$-limit set} and the \emph{$\omega$-limit set} of a point $x\in X$ are defined by%
\begin{eqnarray*}
  \alpha(x,\varphi) &:=& \left\{y\in X\ |\ \exists t_n \rightarrow -\infty:\ \varphi(t_n,x) \rightarrow y\right\},\\
  \omega(x,\varphi) &:=& \left\{y\in X\ |\ \exists t_n \rightarrow \infty:\ \varphi(t_n,x) \rightarrow y\right\}.%
\end{eqnarray*}
\end{definition}

\begin{definition}
Let $\varphi:\R\tm X\rightarrow X$ be a continuous flow on a compact metric space $X$. A compact set $K\subset X$ is called \emph{isolated invariant} if it is invariant (that is, $\varphi^t(K)\subset K$ for all $t\in\R$) and if there is a neighborhood $N$ of $K$ such that the implication%
\begin{equation*}
  \varphi(t,x) \in N \mbox{\quad for all } t\in\R \qquad \Rightarrow \qquad x \in K%
\end{equation*}
holds. A \emph{Morse decomposition} is a finite collection $\{\MC_i\}_{i=1}^n$ of nonempty, pairwise disjoint, and compact isolated invariant sets with the following properties:%
\begin{enumerate}
\item[(i)] For all $x\in X$ it holds that $\alpha(x,\varphi),\omega(x,\varphi)\subset \bigcup_{i=1}^n \MC_i$.%
\item[(ii)] Suppose there are $\MC_{j_0},\MC_{j_1},\ldots,\MC_{j_l}$ and $x_1,\ldots,x_l\in X\backslash \bigcup_{i=1}^n\MC_i$ with $\alpha(x_i,\varphi)\subset \MC_{j_{i-1}}$ and $\omega(x_i,\varphi)\subset \MC_{j_i}$ for $i=1,\ldots,l$. Then $\MC_{j_0}\neq \MC_{j_l}$.%
\end{enumerate}
The elements of a Morse decomposition are called \emph{Morse sets}. One can define an order on the Morse sets by%
\begin{equation*}
  \MC_i \preceq \MC_j \ :\Leftrightarrow \ \exists x\in X:\ \alpha(x,\varphi)\subset \MC_i \mbox{ and } \omega(x,\varphi)\subset \MC_j.%
\end{equation*}
A Morse decomposition is \emph{finer} than another one if the elements of the first one are contained in those of the second one. A finest Morse decomposition is a Morse decomposition which is finer than every other one.%
\end{definition}

It is obvious that a finest Morse decomposition, if it exists, is unique. A finest Morse decomposition exists if and only if the chain recurrent set of the given flow has only finitely many components. In this case, the Morse sets coincide with the chain recurrent components.%

For a projective flow, we have the following result.%

\begin{proposition}
Let $A\in\End(V)$ with associated projective flow $\psi_A$ on $\P(V)$. Then the components of the chain recurrent set of $\psi_A$ are the projective subspaces $\P V_1,\ldots,\P V_k$, where $V = V_1 \oplus \cdots \oplus V_k$ is the decomposition of $V$ into the Lyapunov spaces of $A$, that is, the sums of generalized eigenspaces corresponding to eigenvalues with the same real part. Consequently, $\{\P V_1,\ldots,\P V_k\}$ is the finest Morse decomposition of $\psi_A$. If we assume that real parts of the eigenvalues are $\lambda_1>\cdots>\lambda_k$ and $V_i$ corresponds to $\lambda_i$, then the order of the Morse sets is $\P V_k \preceq \P V_{k-1} \preceq \cdots \preceq \P V_1$.
\end{proposition}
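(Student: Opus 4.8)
The plan is to verify that $\{\P V_1,\dots,\P V_k\}$ satisfies the axioms of a Morse decomposition with the asserted order, and then to prove that each $\P V_i$ is chain transitive. Granting the fact recalled just before the statement — that when the chain recurrent set has only finitely many components these components coincide with the Morse sets of the finest Morse decomposition — these two pieces together identify the $\P V_i$ as the chain recurrent components and as the finest Morse decomposition.

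The first step is an analysis of the limit behaviour of trajectories by the \emph{dominant component} method. Decomposing $x = x_1 \oplus \cdots \oplus x_k$ with $x_i \in V_i$ and measuring lengths in an adapted norm of the type built in Lemma~\ref{lem_adapatednorm} (so that $\|\rme^{At}x_i\|_A$ behaves like $\rme^{\lambda_i t}$ up to subexponential factors), the component with the largest real part among the nonzero ones dominates as $t\to +\infty$, and the one with the smallest real part dominates as $t\to -\infty$. Setting $i^\ast = \min\{i : x_i\neq 0\}$ and $i_\ast = \max\{i : x_i\neq 0\}$, one then obtains $\omega(\P x,\psi_A)\subset \P V_{i^\ast}$ and $\alpha(\P x,\psi_A)\subset \P V_{i_\ast}$, while for $x$ lying in a single $V_i$ both limit sets lie in $\P V_i$. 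This establishes property (i) of a Morse decomposition and, at the same time, the isolated invariance of each $\P V_i$: any trajectory staying in a small neighbourhood of $\P V_i$ for all time cannot possess a second nonzero component and hence lies in $\P V_i$. Compactness, invariance, nonemptiness, and pairwise disjointness of the $\P V_i$ are immediate. The order and the no-cycle condition are read off from the same computation: a trajectory with at least two nonzero components has $i_\ast > i^\ast$, so its $\alpha$-limit sits in a space of strictly larger index (smaller real part) than its $\omega$-limit; consequently the index strictly decreases along any chain of connecting orbits, which forbids cycles (property (ii)) and yields $\P V_k \preceq \P V_{k-1} \preceq \cdots \preceq \P V_1$.

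The main obstacle is to show that each $\P V_i$ is chain transitive, hence not further decomposable. Here I would first factor out the scalar part: since $\rme^{(\lambda_i\id + \sigma_i)t} = \rme^{\lambda_i t}\rme^{\sigma_i t}$ and the scalar $\rme^{\lambda_i t}$ acts trivially on $\P V_i$, the restriction $\psi_A|_{\P V_i}$ is the projective flow induced by $\sigma_i$, all of whose eigenvalues lie on the imaginary axis. It therefore suffices to prove that a linear flow with purely imaginary spectrum induces a chain transitive projective flow. The mechanism I would exploit is the recurrence furnished by the semisimple part of $\sigma_i$: writing $\sigma_i = S_i + N_i$ with $S_i$ semisimple, $N_i$ nilpotent, and $[S_i,N_i]=0$, the one-parameter group $\rme^{S_i t}$ is contained in a compact torus, so there are times $t_n \to \infty$ with $\rme^{S_i t_n} \to \id$; combining this near-return with the merely polynomial action of $N_i$ over these long times, one can close up and interconnect $(\ep,T)$-chains by arbitrarily small jumps, so that $\P V_i$ carries no proper attractor and is chain transitive. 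Equivalently, one may argue that attractors of a projectivized linear flow are projectivizations of invariant subspaces and that a proper attractor–repeller pair requires a gap in the Lyapunov spectrum; since $\sigma_i$ has the single Lyapunov exponent $0$, no such gap exists. Making this last step fully rigorous — controlling the interplay of the rotational and nilpotent parts over the recurrence times, or deducing the absence of a proper attractor from the absence of a spectral gap — is the delicate part of the argument, while everything else reduces to the bookkeeping described above.
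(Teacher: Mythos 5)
First, a point of comparison: the paper does not prove this proposition at all --- it quotes it from the literature (\cite[Section 4]{FPS}, \cite{CKl}). So there is no in-paper proof to measure you against, and your attempt has to be judged on its own. Your first half is fine: the dominant-component analysis in an adapted norm correctly yields property (i), isolated invariance, the no-cycle condition and the order $\P V_k \preceq \cdots \preceq \P V_1$, and the reduction of the remaining claim to endomorphisms $\sigma_i$ with purely imaginary spectrum (by factoring out the scalar $\rme^{\lambda_i t}$, which acts trivially on $\P V_i$) is correct.

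The genuine gap is exactly where you flag it: chain transitivity of $\P V_i$. The mechanism you propose --- times $t_n\to\infty$ with $\rme^{S_i t_n}\to\id$, ``combined with the merely polynomial action of $N_i$'' --- does not work as stated, because the recurrence times of the semisimple part do nothing to tame the nilpotent part: $\rme^{N_i t_n}$ diverges along those same times, so the full flow $\rme^{\sigma_i t}$ has no near-returns off the recurrent set, and it is unclear what chains you are closing up. Your fallback (attractors of projectivized linear flows are projectivized invariant subspaces, and a proper attractor--repeller pair forces a spectral gap) is a true statement, but it is a Selgrade-type theorem of essentially the same depth as the proposition itself, so invoking it does not close the gap. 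What does close it, using only tools already in the paper, is to separate the two mechanisms instead of intertwining them: (i) the computation in the proof of Proposition \ref{prop_recurrentsetsphere} shows that every \emph{forward} orbit of $\psi_{\sigma_i}$ converges to the set $\P E$ (the projectivized sum of real eigenspaces), and applying the same computation to $-\sigma_i$ (which has the same $E$) shows every \emph{backward} orbit converges to $\P E$ as well; (ii) on $\P E$ the nilpotent part vanishes, the restricted flow is isometric, and Lemma \ref{lem_skewsymmetricmatrix} gives $t_n\to\infty$ with $\rme^{S t_n}\to\id$ uniformly, so, $\P E$ being connected, any two of its points are joined by an $(\ep,T)$-chain lying in $\P E$ (discretize a path into $\ep/2$-steps; from each step point flow for a time $t_n\geq T$, returning $\ep/2$-close to it, then jump to the next step point); (iii) concatenate: from $p$ flow forward a time $\geq T$ until you are $\ep$-close to $\P E$ and jump onto it, chain through $\P E$ to the point nearest to $\psi_{\sigma_i}(-t_0,q)$ for some large $t_0\geq T$, jump there, and flow forward for time $t_0$ to land exactly at $q$. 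With (i)--(iii) supplying the chain-transitivity step, the rest of your outline does assemble into a complete proof.
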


A proof of this result can be found, for instance, in Ferraiol, Patr\~ao and Seco \cite[Section 4]{FPS} or Colonius and Kliemann \cite{CKl}. It is easy to see that a topological conjugacy from a flow $\phi_1$ to another flow $\phi_2$ maps a finest Morse decomposition of $\phi_1$ to a finest Morse decomposition of $\phi_2$ preserving the order of the Morse sets. Applying this to projective flows, we obtain the following result.%

\begin{proposition}\label{prop_morse}
Let $\psi_A$ and $\psi_B$ be two projective flows on $\P(V)$ such that there exists a topological conjugacy $h:\P(V)\rightarrow\P(V)$ from $\psi_A$ to $\psi_B$. Then, up to linear conjugacy, $A$ and $B$ can be written as%
\begin{eqnarray*}
  A &=& (\lambda_1\id + \sigma_1(A)) \oplus (\lambda_2\id + \sigma_2(A)) \oplus \cdots \oplus (\lambda_k\id + \sigma_k(A)),\\
  B &=& (\mu_1\id + \sigma_1(B)) \oplus (\mu_2\id + \sigma_2(B)) \oplus \cdots \oplus (\mu_k\id + \sigma_k(B)),%
\end{eqnarray*}
where $\lambda_1>\cdots>\lambda_k$, $\mu_1>\cdots>\mu_k$, and $\sigma_i(A)$, $\sigma_i(B)$ are endomorphisms of the same dimension with eigenvalues lying on the imaginary axis.%
\end{proposition}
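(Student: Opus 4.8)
The plan is to read everything off from the description of the finest Morse decomposition in the preceding proposition, together with the fact noted just before the statement that a topological conjugacy carries the finest Morse decomposition of one flow onto that of the other while preserving the order of the Morse sets. First I would apply that proposition separately to $A$ and to $B$. Writing $V = V_1(A)\oplus\cdots\oplus V_{k_A}(A)$ for the decomposition into Lyapunov spaces of $A$, indexed so that the corresponding real parts satisfy $\lambda_1>\cdots>\lambda_{k_A}$, the finest Morse decomposition of $\psi_A$ is $\{\P V_1(A),\ldots,\P V_{k_A}(A)\}$ with the total order $\P V_{k_A}(A)\preceq\cdots\preceq\P V_1(A)$; analogously for $B$ one gets Lyapunov spaces $V_i(B)$, real parts $\mu_1>\cdots>\mu_{k_B}$, and order $\P V_{k_B}(B)\preceq\cdots\preceq\P V_1(B)$.

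Next, since $h$ maps the finest Morse decomposition of $\psi_A$ bijectively onto that of $\psi_B$, the two collections have the same cardinality, so $k_A=k_B=:k$. Because $h$ preserves the relation $\preceq$ and both orders are \emph{total} orders (chains) on $k$ elements, $h$ induces an order isomorphism between them; hence it matches the maxima to the maxima, the second-largest elements to the second-largest, and so on, giving $h(\P V_i(A))=\P V_i(B)$ for every $i=1,\ldots,k$ with the convention that both sides are indexed by decreasing real part.

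Then I would extract the dimension information. Restricting the homeomorphism $h$ to the invariant set $\P V_i(A)$ yields a homeomorphism $\P V_i(A)\rightarrow\P V_i(B)$. Since these are real projective spaces of dimensions $\dim V_i(A)-1$ and $\dim V_i(B)-1$, and real projective spaces of distinct dimensions are not homeomorphic (by invariance of topological dimension), we obtain $\dim V_i(A)=\dim V_i(B)$. Finally, on each Lyapunov space $V_i(A)$ all eigenvalues of $A$ have real part $\lambda_i$, so setting $\sigma_i(A):=A|_{V_i(A)}-\lambda_i\id$ gives an endomorphism with eigenvalues on the imaginary axis and $A|_{V_i(A)}=\lambda_i\id+\sigma_i(A)$; doing the same for $B$ and choosing bases adapted to the two (in general different) Lyapunov decompositions---this is the meaning of ``up to linear conjugacy''---puts $A$ and $B$ into the asserted block forms, with $\dim\sigma_i(A)=\dim V_i(A)=\dim V_i(B)=\dim\sigma_i(B)$.

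I expect the only points requiring genuine care to be the order-matching step, where one must use that the finest Morse order is a chain so that order preservation pins down the correspondence $i\mapsto i$ rather than an arbitrary permutation, and the appeal to topological invariance of dimension to convert the homeomorphism $\P V_i(A)\rightarrow\P V_i(B)$ into the equality $\dim V_i(A)=\dim V_i(B)$. Everything else is a direct transcription of the preceding proposition and the remark on conjugacy-invariance of the finest Morse decomposition.
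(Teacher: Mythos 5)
Your proposal is correct and takes essentially the same route as the paper: the paper deduces this proposition directly from the preceding description of the finest Morse decomposition (projectivized Lyapunov spaces, totally ordered by decreasing real part) together with the stated fact that a topological conjugacy carries the finest Morse decomposition onto the finest Morse decomposition preserving the order, which is precisely the argument you spell out. The two points you flag as needing care --- the order-matching between chains and the use of topological invariance of dimension to get $\dim V_i(A) = \dim V_i(B)$ --- are exactly the details the paper leaves implicit, and your treatment of them is sound.
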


\subsection{The Recurrent Set}%

Let $V$ be a Euclidean space of dimension $n+1$ with inner product $\langle\cdot,\cdot\rangle$ and associated norm $\|\cdot\|$. Then a linear flow $\varphi_A(t,x) = \rme^{At}x$ on $V$ induces a flow on the $n$-dimensional unit sphere $\rmS(V) := \{x\in V\ :\ \|x\|=1\}$ given by%
\begin{equation*}
  \chi_A(t,x) := \frac{\rme^{At}x}{\|\rme^{At}x\|}.%
\end{equation*}
Writing $\pi := \P|_{\rmS(V)}:\rmS(V) \rightarrow \P(V)$, we find that%
\begin{equation*}
  \pi\chi_A(t,x) = \psi_A(t,\pi x) \mbox{\quad for all\ } t\in\R,\ x\in\rmS(V).%
\end{equation*}
The following lemma shows that a conjugacy between two projective flows can be lifted to a conjugacy between the corresponding flows on the sphere.%

\begin{lemma}\label{lem_conjugacylifting}
Let $A,B\in\End(V)$ and assume that there exists a topological conjugacy $h:\P(V)\rightarrow\P(V)$ from $\psi_A$ to $\psi_B$. Then there also exists a topological conjugacy $H:\rmS(V)\rightarrow\rmS(V)$ from $\chi_A$ to $\chi_B$.%
\end{lemma}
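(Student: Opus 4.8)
The plan is to exploit that $\pi = \P|_{\rmS(V)} : \rmS(V) \to \P(V)$ is a two-sheeted regular covering map, whose nontrivial deck transformation is the antipodal map $a(x) = -x$, and to obtain $H$ as a lift of $h$ through this covering rather than building it by hand. A first structural observation I would record is that the sphere flow is antipodally equivariant: since $\chi_A(t,-x) = \rme^{At}(-x)/\|\rme^{At}(-x)\| = -\chi_A(t,x)$, the two points of a $\pi$-fiber are genuinely antipodal and are carried to antipodal points by each time-$t$ map. The map $H$ will be produced purely topologically, and the intertwining of the flows verified afterwards.

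To produce $H$ I would apply the lifting criterion to the continuous map $f := h \circ \pi : \rmS(V) \to \P(V)$: a continuous $H$ with $\pi \circ H = h \circ \pi$ exists provided $f_*(\pi_1(\rmS(V))) \subseteq \pi_*(\pi_1(\rmS(V)))$. When $\dim V \geq 3$ the sphere $\rmS(V)$ is simply connected and this inclusion is automatic. When $\dim V = 2$, $\pi$ is the degree-two cover $S^1 \to S^1$, so $\pi_*$ has image the index-two subgroup $2\mathbb{Z}\subset\pi_1(\P(V))$; since a self-homeomorphism induces $\pm\id$ on $\pi_1(S^1)$, the composite $f_* = h_* \circ \pi_*$ still lands in $2\mathbb{Z}$ and the lift exists. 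The case $\dim V = 1$ is degenerate ($\rmS(V)=\{\pm1\}$ with trivial flows) and one simply sets $H = \id$. After fixing base points $\tilde x_0 \mapsto \tilde y_0 \in \pi^{-1}(h(\pi\tilde x_0))$, the lift $H$ is pinned down uniquely.

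Next I would show that $H$ is a homeomorphism. Lifting $h^{-1}\circ\pi$ in the same way yields a continuous $H'$ with $\pi \circ H' = h^{-1}\circ\pi$, which I would normalize by $H'(\tilde y_0) = \tilde x_0$. Then $\pi \circ (H'\circ H) = \pi$ and $\pi \circ (H\circ H') = \pi$, so both $H'\circ H$ and $H\circ H'$ are lifts of $\pi$ through $\pi$; by construction $H'\circ H$ fixes $\tilde x_0$ and $H\circ H'$ fixes $\tilde y_0$, so the uniqueness of lifts on the connected space $\rmS(V)$ forces both compositions to equal $\id$. Hence $H' = H^{-1}$ and $H$ is a homeomorphism.

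Finally, for the conjugacy identity I would apply $\pi$: using $\pi\circ H = h\circ\pi$, the covering relation $\pi\chi_A(t,\cdot) = \psi_A(t,\pi\,\cdot)$, and the hypothesis that $h$ conjugates $\psi_A$ to $\psi_B$, one gets $\pi(H(\chi_A(t,x))) = \psi_B(t,\pi H x) = \pi(\chi_B(t,Hx))$. Thus for each $(t,x)$ the unit vectors $H(\chi_A(t,x))$ and $\chi_B(t,Hx)$ lie in a single $\pi$-fiber and are therefore equal or antipodal. The sign is measured by $g_x(t) := \langle H(\chi_A(t,x)),\,\chi_B(t,Hx)\rangle \in \{-1,+1\}$, which is continuous in $t$; since $\R$ is connected and $g_x(0) = \langle Hx, Hx\rangle = 1$, it is identically $1$, giving $H(\chi_A(t,x)) = \chi_B(t,Hx)$ for all $t,x$. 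I expect the main obstacle to be the non-simply-connected case $\dim V = 2$, where existence of the lift rests on the index-two fundamental-group computation, together with the sign-constancy step; both ultimately hinge on connectedness of $\rmS(V)$ and of $\R$.
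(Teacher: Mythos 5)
Your proposal is correct and follows essentially the same route as the paper: lift $h$ through the double cover $\pi$ via covering theory, lift $h^{-1}$ to obtain the inverse and conclude $H$ is a homeomorphism, then observe $H(\chi_A^t(x)) = \pm\chi_B^t(H(x))$ fiberwise and kill the sign by continuity in $t$ together with the value at $t=0$. The only difference is one of detail, to your credit: you verify the lifting criterion explicitly in the non-simply-connected case $\dim V = 2$ (and the degenerate case $\dim V = 1$), and you normalize the two lifts at base points so that they are literal inverses, where the paper instead cites Hatcher for the lift and argues via deck transformations plus compactness.
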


\begin{proof}
From covering theory it follows that $h$ can be lifted to a continuous map $H:\rmS(V)\rightarrow \rmS(V)$, that is, $\pi \circ H = h \circ \pi$ (cf., for instance, Hatcher \cite{Hat}). In fact, $H$ is a homeomorphism, since also $h^{-1}$ can be lifted to a continuous map $G:\rmS(V)\rightarrow\rmS(V)$. Therefore, $\pi = \pi \circ (G\circ H) = \pi \circ (H\circ G)$, which implies that $G\circ H$ and $H\circ G$ are deck transformations of the two-fold covering map $\pi$, hence $G\circ H,H\circ G \in \{\id,-\id\}$. This implies both injectivity and surjectivity of $H$. By compactness of $\rmS(V)$, $H$ must be a homeomorphism. Moreover,%
\begin{eqnarray*}
  \pi \circ H \circ \chi_A^t &=& h\circ\pi\circ\chi_A^t = h \circ \psi_A^t \circ \pi = \psi_B^t \circ h\circ \pi\\
                             &=& \psi_B^t \circ \pi \circ H = \pi \circ \chi_B^t \circ H.%
\end{eqnarray*}
Hence, for every $t\in\R$ and $x\in\rmS(V)$ we have $H(\chi_A^t(x))=\pm\chi_B^t(H(x))$. By continuity of both sides with respect to $t$, we see by putting $t=0$ that in fact $H(\chi_A^t(x))\equiv\chi_B^t(H(x))$. This concludes the proof.%
\end{proof}

\begin{remark}
The idea of lifting the conjugacy on projective space to the unit sphere is also used by Kuiper \cite{Ku2} in the discrete-time case. But in that case a problem remains concerning the signs of the lifted transformations, that is, if $A$ and $B$ are the corresponding isomorphisms of $V$ and $A^{\rmS}$ and $B^{\rmS}$ the associated maps on $\rmS(V)$, and the induced maps on $\P(V)$ are topologically conjugate, then the lifting argument shows that $A^{\rmS}$ is conjugate to either $B^{\rmS}$ or $-B^{\rmS}$, but it is not easy to see that it is $B^{\rmS}$.%
\end{remark}

We want to determine the recurrent set of $\chi_A$. First we recall the definitions of recurrent points and the recurrent set.%

\begin{definition}
Let $\varphi:\R\tm X\rightarrow X$ be a continuous flow on a compact metric space $X$. A point $x\in X$ is called \emph{recurrent} if $x\in\omega(x,\varphi)$. The set of all recurrent points of $\varphi$ is denoted by $\RC(\varphi)$.%
\end{definition}

Note that $\RC(\varphi)$ is invariant, but in general not closed in $X$. It is easy to see that a topological conjugacy from $\varphi_1$ to $\varphi_2$ maps $\RC(\varphi_1)$ onto $\RC(\varphi_2)$.%

\begin{lemma}\label{lem_skewsymmetricmatrix}
Let $S\in\End(V)$ be skew-symmetric with respect to the inner product on $V$. Then there exists a sequence $(t_n)_{n\geq1}$ of positive real numbers with $t_n\rightarrow\infty$ and $\rme^{St_n}\rightarrow\id_V$.%
\end{lemma}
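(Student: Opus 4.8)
The plan is to exploit that skew-symmetry forces the flow to live in a compact group, so that recurrence to the identity becomes a soft consequence of compactness and the group law, with no explicit computation involving the eigenfrequencies of $S$. Since $S^\top = -S$ with respect to the inner product, we have $(\rme^{St})^\top = \rme^{-St} = (\rme^{St})^{-1}$, and moreover $\det \rme^{St} = \rme^{t\operatorname{tr}(S)} = 1$ because skew-symmetric operators have vanishing trace. Hence $\rme^{St}\in\SO(V)$ for every $t$, and the whole orbit $\{\rme^{St}:t\geq0\}$ lies in the \emph{compact} group $\SO(V)$.

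First I would invoke sequential compactness of $\SO(V)$: the integer-time sequence $(\rme^{Sn})_{n\geq1}$ has a subsequence with $\rme^{Sn_k}\rightarrow Q$ for some $Q\in\SO(V)$, where the $n_k$ are strictly increasing. The key is the identity
\[
  \rme^{S(n_k-n_l)} - \id = \left(\rme^{Sn_k} - \rme^{Sn_l}\right)\rme^{-Sn_l},
\]
together with the fact that right-multiplication by the orthogonal map $\rme^{-Sn_l}$ preserves the operator norm, which gives
\[
  \left\|\rme^{S(n_k-n_l)} - \id\right\| = \left\|\rme^{Sn_k} - \rme^{Sn_l}\right\| \leq \left\|\rme^{Sn_k} - Q\right\| + \left\|Q - \rme^{Sn_l}\right\|.
\]
Thus whenever $\rme^{Sn_k}$ and $\rme^{Sn_l}$ are both close to $Q$, the element $\rme^{S(n_k-n_l)}$ is close to $\id$, and crucially the relevant time is the \emph{difference} $n_k-n_l$.

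The step I expect to require the most care is forcing the times to tend to infinity, rather than merely producing recurrence at some unspecified (possibly bounded) times; this is exactly where the difference trick pays off. Given $\ep>0$ and $T>0$, I would fix $l$ with $\|\rme^{Sn_l}-Q\|<\ep/2$, then choose $k>l$ large enough that simultaneously $\|\rme^{Sn_k}-Q\|<\ep/2$ and $n_k-n_l>T$, the latter being possible since $n_k\rightarrow\infty$. Setting $t:=n_k-n_l$ then yields a positive time $t>T$ with $\|\rme^{St}-\id\|<\ep$. Finally, applying this with $\ep=1/n$ and $T=n$ produces times $t_n>n$ satisfying $\|\rme^{St_n}-\id\|<1/n$, so that $t_n\rightarrow\infty$ and $\rme^{St_n}\rightarrow\id_V$, which is the desired conclusion.
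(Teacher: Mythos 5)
Your proof is correct and follows essentially the same route as the paper: both pass to the compact group $\SO(V)$, extract a convergent subsequence along times tending to infinity, and use invariance of a distance (your operator norm under right multiplication by orthogonal maps, the paper's left-invariant Riemannian metric) to convert closeness of two group elements into closeness of $\rme^{S(t_k-t_l)}$ to $\id_V$, with the difference trick forcing the recurrence times to infinity. The choice of integer times and of the operator norm instead of an invariant Riemannian metric is purely cosmetic.
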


\begin{proof}
For each $t\in\R$, $\rme^{St}$ is an element of the compact Lie group $\SO(V) = \{A\in\End(V) : AA^* = \id_V, \det A = 1\}$, where $A^*$ denotes the adjoint of $A$ with respect to the given inner product. Consider a left-invariant Riemannian metric on $\SO(V)$ and denote by $d$ the associated distance function. Consider a sequence $(\tau_n)_{n\geq1}$, $\tau_n>0$, $\tau_n\rightarrow\infty$, such that $\rme^{S\tau_n}$ converges. Then, for given $\ep>0$ we find $m=m(\ep)\in\N$ such that for all $l,k\geq m$ we have $d(\rme^{S(\tau_l-\tau_k)},\id_V) = d(\rme^{S\tau_l},\rme^{S\tau_k})<\ep$. Now for each $n\in\N$ let $\ep_n := 1/n$ and $m=m(\ep_n)$. Then choose $l>m(\ep_n)$ such that $t_n := \tau_l - \tau_{m(\ep_n)} > n$. This implies $d(\rme^{St_n},\id_V) < 1/n$ and hence proves the lemma.%
\end{proof}

Using the preceding lemma we can now characterize the recurrent set of $\chi_A$.%

\begin{proposition}\label{prop_recurrentsetsphere}
Let $A\in\End(V)$ be an endomorphism of $V$ all of whose eigenvalues lie on the imaginary axis, which is given in Jordan normal form with respect to an orthonormal basis of $V$. Let $E_A$ be the sum of the real eigenspaces of $A$ (that is, the subspaces of the form $V \cap (E(i\alpha)\oplus E(-i\alpha))$, where $E(\cdot)$ denotes the corresponding complex eigenspace). Then $\RC(\chi_A) = E_A \cap \rmS(V)$.%
\end{proposition}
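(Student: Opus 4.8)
The plan is to reduce everything to the additive Jordan--Chevalley decomposition $A = S + N$, where $S$ is the semisimple and $N$ the nilpotent part and $SN = NS$. The key structural observation is that, since $A$ is given in Jordan normal form with respect to an \emph{orthonormal} basis and all eigenvalues are purely imaginary, $S$ is the block-diagonal part built from the $2\times2$ rotation generators $\left(\begin{smallmatrix} 0 & -\alpha \\ \alpha & 0\end{smallmatrix}\right)$ (together with zero blocks for the eigenvalue $0$), while $N$ is the nilpotent super-diagonal part. Hence $S$ is \emph{skew-symmetric} with respect to the given inner product. I would first record three facts: (i) $E_A = \ker N$, because the genuine eigenspaces are exactly the kernels of the nilpotent part inside the generalized eigenspaces; (ii) $\rme^{St}$ is orthogonal, so $\|\rme^{At}x\| = \|\rme^{Nt}x\|$ and therefore $\chi_A(t,x) = \rme^{St}u(t)$ with $u(t) := \rme^{Nt}x/\|\rme^{Nt}x\|$; and (iii) $S$ and $N$ commute, so $E_A = \ker N$ is $\rme^{St}$-invariant.

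For the inclusion $E_A \cap \rmS(V) \subseteq \RC(\chi_A)$: if $x \in E_A \cap \rmS(V)$, then $Nx = 0$, so $\chi_A(t,x) = \rme^{St}x$. Applying Lemma~\ref{lem_skewsymmetricmatrix} to the skew-symmetric operator $S$ yields a sequence $t_n \to \infty$ with $\rme^{St_n} \to \id_V$, whence $\chi_A(t_n,x) = \rme^{St_n}x \to x$. Thus $x \in \omega(x,\chi_A)$, i.e.\ $x$ is recurrent.

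For the reverse inclusion, I would show that $\omega(x,\chi_A) \subseteq E_A \cap \rmS(V)$ whenever $Nx \neq 0$, which already forces $x \notin \omega(x,\chi_A)$. Let $m \geq 1$ be the largest integer with $N^m x \neq 0$. Expanding $\rme^{Nt}x = \sum_{l=0}^m \frac{t^l}{l!}N^l x$ and dividing by the leading power $t^m/m!$ shows $u(t) \to v := N^m x/\|N^m x\|$ as $t \to \infty$; moreover $Nv = 0$, so $v \in \ker N = E_A$. Now take any $t_n \to \infty$. Since the maps $\rme^{St_n}$ lie in the compact group $\SO(V)$, after passing to a subsequence $\rme^{St_n} \to Q$; because $E_A$ is closed and $\rme^{St}$-invariant, $Q$ maps $E_A$ into $E_A$, so $\chi_A(t_n,x) = \rme^{St_n}u(t_n) \to Qv \in E_A \cap \rmS(V)$. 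Hence every point of $\omega(x,\chi_A)$ lies in $E_A$, and since $x \notin E_A$ the point $x$ is not recurrent.

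The main obstacle, and the step deserving the most care, is the structural claim of the first paragraph: that the semisimple part $S$ is genuinely skew-symmetric with respect to the given orthonormal basis (so that $\rme^{St}$ is orthogonal and Lemma~\ref{lem_skewsymmetricmatrix} applies) and that $E_A = \ker N$. Both follow from a direct inspection of the real Jordan normal form for purely imaginary eigenvalues, together with the uniqueness of the additive Jordan decomposition; once these are in place, the two limit arguments above are routine.
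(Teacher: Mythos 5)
Your proof is correct and takes essentially the same route as the paper's: both rest on the commuting decomposition $A = S+N$ with $S$ skew-symmetric and $N$ nilpotent, both invoke Lemma~\ref{lem_skewsymmetricmatrix} for the inclusion $E_A\cap\rmS(V)\subseteq\RC(\chi_A)$, and both show that for $x\notin E_A$ the normalized vector $\rme^{Nt}x/\|\rme^{Nt}x\|$ converges into $E_A$, forcing $\omega(x,\chi_A)\subseteq E_A\cap\rmS(V)$ and hence non-recurrence. The only cosmetic differences are that you identify $E_A=\ker N$ and pass to subsequential limits in the compact group $\SO(V)$, whereas the paper splits $x = x_1\oplus x_2$ along $E_A\oplus E_A^{\perp}$ and concludes from the distance of the trajectory to $E_A\cap\rmS(V)$ tending to zero.
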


\begin{proof}
The restriction $A|_{E_A}$ is skew-symmetric. By Lemma \ref{lem_skewsymmetricmatrix}, there is a sequence $t_n\rightarrow\infty$ with $\rme^{A|_{E_A}t_n}\rightarrow\id_{E_A}$, which implies%
\begin{equation*}
  \chi_A(t_n,x) = \frac{\rme^{At_n}x}{\|\rme^{At_n}x\|} = \rme^{At_n}x \rightarrow x \mbox{\quad for all\ } x\in E_A\cap\rmS(V).%
\end{equation*}
Therefore, $E_A\cap\rmS(V) \subset \RC(\chi_A)$. On the other hand, all $x\in\rmS(V)\backslash E_A$ are not recurrent, which is proved as follows. We can write%
\begin{equation*}
  x = x_1 \oplus x_2,\qquad x_1\in E_A,\ 0 \neq x_2 \in E_A^{\bot},%
\end{equation*}
and $A = S + N$, $SN=NS$, with a skew-symmetric endomorphism $S$ and a nilpotent endomorphism $N$ (the Jordan decomposition of $A$). Then $\rme^{St}$ is orthogonal and $\rme^{Nt}x_1=x_1$. This implies%
\begin{eqnarray*}
  \chi_A(t,x) &=& \frac{\rme^{At}x}{\|\rme^{At}x\|} = \frac{\rme^{(S+N)t}x}{\|\rme^{(S+N)t}x\|} = \frac{\rme^{St}\rme^{Nt}x}{\|\rme^{St}\rme^{Nt}x\|}\\
              &=& \frac{\rme^{St}\rme^{Nt}(x_1\oplus x_2)}{\|\rme^{Nt}(x_1\oplus x_2)\|} = \rme^{St}\frac{x_1\oplus \rme^{Nt}x_2}{\|x_1\oplus \rme^{Nt}x_2\|}\\
              &=& \rme^{St}\left[\frac{x_1}{\|x_1 \oplus \rme^{Nt}x_2\|} + \frac{\rme^{Nt}x_2}{\|x_1\oplus\rme^{Nt}x_2\|}\right].%
\end{eqnarray*}
Since $x_2\neq0$ and $x_2\notin E_A$, we have $\|\rme^{Nt}x_2\|\rightarrow\infty$. Hence,%
\begin{equation*}
  \left\|\chi_A(t,x) - \rme^{St}\frac{\rme^{Nt}x_2}{\|\rme^{Nt}x_2\|}\right\| \rightarrow 0 \mbox{\quad for\ } t\rightarrow\infty.%
\end{equation*}
Since $N$ is nilpotent, we have $N^jx_2\in E_A$ and $N^{j+1}x_2 = 0$ for some $j\in\{1,\ldots,n-1\}$. This implies%
\begin{eqnarray*}
  \frac{\rme^{Nt}x_2}{\|\rme^{Nt}x_2\|} &=& \frac{x_2 + tNx_2 + \frac{t^2}{2!}N^2x_2 + \cdots + \frac{t^j}{j!}N^jx_2}{\|x_2 + tNx_2 + \frac{t^2}{2!}N^2x_2 + \cdots + \frac{t^j}{j!}N^jx_2\|}\\ 
  &=& \frac{t^{-j}x_2 + t^{1-j}Nx_2 + \frac{t^{2-j}}{2!}N^2x_2 + \cdots + \frac{1}{j!}N^jx_2}{\|t^{-j}x_2 + t^{1-j}Nx_2 + \frac{t^{2-j}}{2!}N^2x_2 + \cdots + \frac{1}{j!}N^jx_2\|}\\
  &\xrightarrow{t\rightarrow\infty}& \frac{N^jx_2}{\|N^jx_2\|} \in E_A \cap \rmS(V).%
\end{eqnarray*}
Therefore, $x$ cannot be recurrent, which proves that $\RC(\chi_A) \subset E_A \cap \rmS(V)$.%
\end{proof}

Using the preceding proposition we can immediately determine the recurrent set of the projective flow $\psi_A$.%

\begin{corollary}
Under the assumptions of Proposition \ref{prop_recurrentsetsphere}, the recurrent set of $\psi_A$ is given by $\RC(\psi_A)=\P E_A$.%
\end{corollary}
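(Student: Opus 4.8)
The plan is to exploit the two-fold covering map $\pi = \P|_{\rmS(V)}:\rmS(V)\rightarrow\P(V)$, which intertwines the two flows via $\pi\chi_A^t = \psi_A^t\pi$, and to transport the already-established identity $\RC(\chi_A) = E_A\cap\rmS(V)$ of Proposition~\ref{prop_recurrentsetsphere} down to $\P(V)$. Since $\pi(E_A\cap\rmS(V)) = \P E_A$, the goal is exactly to show $\RC(\psi_A) = \pi(\RC(\chi_A))$, and I would prove the two inclusions separately.

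For the inclusion $\P E_A\subset\RC(\psi_A)$, I would take $p\in\P E_A$ and lift it to some $x\in E_A\cap\rmS(V)$ with $\pi x = p$. By Proposition~\ref{prop_recurrentsetsphere}, $x$ is $\chi_A$-recurrent, so there is a sequence $t_n\rightarrow\infty$ with $\chi_A(t_n,x)\rightarrow x$; applying the continuous map $\pi$ together with the intertwining relation gives $\psi_A(t_n,p) = \pi\chi_A(t_n,x)\rightarrow\pi x = p$, whence $p\in\omega(p,\psi_A)$, that is, $p\in\RC(\psi_A)$. (More generally, this argument shows $\pi(\RC(\chi_A))\subset\RC(\psi_A)$ for any continuous semiconjugacy.)

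The reverse inclusion $\RC(\psi_A)\subset\P E_A$ is the main obstacle, precisely because $\pi$ is two-to-one: knowing only that $\psi_A(t_n,\pi x)\rightarrow\pi x$ does \emph{not} lift to $\chi_A(t_n,x)\rightarrow x$, since it could instead force $\chi_A(t_n,x)\rightarrow -x$, so the bare identity for $\RC(\chi_A)$ is not by itself sufficient. Instead I would use the stronger orbit information contained in the proof of Proposition~\ref{prop_recurrentsetsphere}: for $x\notin E_A$ one has $\dist(\chi_A(t,x),E_A\cap\rmS(V))\rightarrow 0$ as $t\rightarrow\infty$, so that $\omega(x,\chi_A)\subset E_A\cap\rmS(V)$. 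Combining this with the elementary identity $\omega(\pi x,\psi_A) = \pi(\omega(x,\chi_A))$ (which follows from compactness of $\rmS(V)$ and continuity of $\pi$ by passing to subsequences), I obtain, for any $p=\pi x\notin\P E_A$, i.e. $x\notin E_A$,
\[
  \omega(p,\psi_A) = \pi(\omega(x,\chi_A)) \subset \pi(E_A\cap\rmS(V)) = \P E_A .
\]
Since $p\notin\P E_A$, this gives $p\notin\omega(p,\psi_A)$, so $p$ is not recurrent, which is the desired inclusion.

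Putting the two inclusions together yields $\RC(\psi_A) = \P E_A$. I expect the only genuinely nontrivial points to be the identity $\omega(\pi x,\psi_A) = \pi(\omega(x,\chi_A))$ and the escape estimate $\dist(\chi_A(t,x),E_A\cap\rmS(V))\rightarrow 0$; the former is a routine compactness argument, and the latter is already contained in the computation in the proof of Proposition~\ref{prop_recurrentsetsphere}, so no new estimates are needed.
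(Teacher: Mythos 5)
Your proof is correct and follows essentially the same route as the paper: the inclusion $\P E_A\subset\RC(\psi_A)$ via the semiconjugacy $\P\circ\chi_A^t=\psi_A^t\circ\P$, and the reverse inclusion via the escape estimate $\dist(\chi_A(t,x),E_A\cap\rmS(V))\rightarrow 0$ taken from the proof (not merely the statement) of Proposition \ref{prop_recurrentsetsphere}. Your explicit observation that the two-to-one covering blocks a naive lifting of recurrence is precisely the point the paper handles implicitly in the same way, so the two arguments differ only in bookkeeping (your identity $\omega(\P x,\psi_A)=\P(\omega(x,\chi_A))$ versus the paper's direct convergence statement).
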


\begin{proof}
We have $\P \circ \chi_A^t = \psi_A^t \circ \P$ for all $t\in\R$. Hence,%
\begin{equation*}
  \P E_A = \P(E_A \cap \rmS(V)) = \P(\RC(\chi_A)) \subset \RC(\psi_A).%
\end{equation*}
Now assume that $\P x\notin \P E_A$ for some $x\in\rmS(V)$. Then $x\notin E_A$ and hence the proof of Proposition \ref{prop_recurrentsetsphere} shows that $\dist(\chi_A(t,x),E_A\cap\rmS(V))\rightarrow0$ implying that $\psi_A(t,\P x) = \P\chi_A(t,x) \rightarrow \P E_A$. Therefore, $\P x \notin \RC(\psi_A)$.%
\end{proof}

\begin{remark}
Note that the preceding corollary also holds without the assumption that $A$ is given in Jordan normal form with respect to an orthonormal basis, since for any given $A$ one can choose an inner product such that this assumption holds, and the set $E_A$ does not depend on the inner product.%
\end{remark}

\begin{remark}
A characterization of the recurrent set of a projective flow can also be found in Ferraiol, Patr\~ao, Seco \cite{FPS}, and in Kuiper \cite{Ku2} for the discrete-time case.%
\end{remark}

From the algebraic description of the recurrent set on the unit sphere and the Kuiper-Ladis characterization of topological conjugacy for linear flows on Euclidean space, mentioned in the introduction, we can now conclude the following result.%

\begin{corollary}\label{cor_eigenvalues}
Let $A,B\in\End(V)$ be endomorphisms all of whose eigenvalues lie on the imaginary axis. If $\psi_A$ and $\psi_B$ are topologically conjugate, then the following assertions hold:%
\begin{enumerate}
\item[(a)] The spectra of $A$ and $B$ coincide.%
\item[(b)] The geometric multiplicities of the eigenvalues of $A$ and $B$ coincide.%
\item[(c)] The numbers of Jordan blocks within the generalized eigenspaces of $A$ and $B$ coincide.%
\end{enumerate}
\end{corollary}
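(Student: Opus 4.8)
The plan is to reduce the statement to the Kuiper--Ladis classification applied to the flows restricted to their recurrent sets. First I would invoke Lemma \ref{lem_conjugacylifting} to lift the assumed topological conjugacy $h:\P(V)\rightarrow\P(V)$ between $\psi_A$ and $\psi_B$ to a topological conjugacy $H:\rmS(V)\rightarrow\rmS(V)$ between the sphere flows $\chi_A$ and $\chi_B$. Since a topological conjugacy carries the recurrent set of one flow onto that of the other, and since Proposition \ref{prop_recurrentsetsphere} identifies these sets as $\RC(\chi_A)=E_A\cap\rmS(V)$ and $\RC(\chi_B)=E_B\cap\rmS(V)$, the restriction of $H$ yields a homeomorphism $E_A\cap\rmS(V)\rightarrow E_B\cap\rmS(V)$ that conjugates the two restricted flows. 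In particular $E_A\cap\rmS(V)$ and $E_B\cap\rmS(V)$ are homeomorphic, so $\dim E_A=\dim E_B$.

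Next I would observe that $A|_{E_A}$ is skew-symmetric, so $\rme^{A|_{E_A}t}$ is orthogonal and $\chi_A(t,x)=\rme^{At}x$ for $x\in E_A\cap\rmS(V)$, and analogously for $B$. Thus the restricted flow on $E_A\cap\rmS(V)$ is exactly the norm-preserving linear flow $\rme^{A|_{E_A}t}$ restricted to the unit sphere, i.e.\ the ``cone'' over the sphere flow. I would therefore extend $H|_{E_A\cap\rmS(V)}$ radially, setting $\hat H(rx):=r\,H(x)$ for $r>0$ and $x\in E_A\cap\rmS(V)$, together with $\hat H(0):=0$, obtaining a norm-preserving homeomorphism $\hat H:E_A\rightarrow E_B$ which conjugates the linear flows $\rme^{A|_{E_A}t}$ and $\rme^{B|_{E_B}t}$. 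Applying the Kuiper--Ladis theorem to these two linear flows, whose spectra lie entirely on the imaginary axis so that all real parts vanish and both restrictions are semisimple, the classification forces the full Jordan structures to agree. Hence $A|_{E_A}$ and $B|_{E_B}$ are linearly conjugate; in particular they have the same eigenvalues with the same multiplicities.

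Finally I would transfer this spectral information from the restrictions back to $A$ and $B$. Since $E_A$ is the sum of the genuine eigenspaces of $A$, every eigenvalue of $A$ occurs as an eigenvalue of $A|_{E_A}$ with the same geometric multiplicity: for a purely imaginary pair $\pm i\alpha$ the real eigenspace $V\cap(E(i\alpha)\oplus E(-i\alpha))$ has real dimension twice the complex geometric multiplicity, and for the eigenvalue $0$ it is the kernel of $A$. The linear conjugacy of $A|_{E_A}$ and $B|_{E_B}$ then yields at once that the spectra of $A$ and $B$ coincide, proving (a), and that the geometric multiplicities of corresponding eigenvalues coincide, proving (b). Assertion (c) follows immediately, since the number of Jordan blocks attached to an eigenvalue equals its geometric multiplicity.

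The step I expect to require the most care is the passage from the conjugacy of the sphere flows on the recurrent set to a conjugacy of the full linear flows on $E_A$ and $E_B$: one must use that $A|_{E_A}$ is genuinely skew-symmetric (which rests on the orthonormal-basis hypothesis underlying Proposition \ref{prop_recurrentsetsphere}) to know the flow is orthogonal, and then verify that the radial extension $\hat H$ is indeed a homeomorphism intertwining the flows, so that the Kuiper--Ladis theorem becomes applicable.
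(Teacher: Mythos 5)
Your proposal is correct and follows essentially the same route as the paper's proof: lifting the conjugacy to the sphere via Lemma \ref{lem_conjugacylifting}, matching recurrent sets via Proposition \ref{prop_recurrentsetsphere}, extending the restricted conjugacy radially (the paper's map $G(x)=\|x\|H(x/\|x\|)$ is exactly your $\hat H$), and invoking Kuiper--Ladis to get linear conjugacy of $A|_{E_A}$ and $B|_{E_B}$. Your closing paragraph spelling out how the spectral data transfers back from the restrictions to $A$ and $B$ is merely a more explicit rendering of the paper's final sentence, not a different argument.
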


\begin{proof}
We may assume without loss of generality that $A$ and $B$ are given in Jordan normal form with respect to an orthonormal basis of $V$. Lemma \ref{lem_conjugacylifting} yields a topological conjugacy $H:\rmS(V)\rightarrow\rmS(V)$ from $\chi_A$ to $\chi_B$. Let $E_A$ and $E_B$ be the sums of real eigenspaces of $A$ and $B$, respectively. By Proposition \ref{prop_recurrentsetsphere}, the sets $E_A\cap\rmS(V)$ and $E_B\cap\rmS(V)$ are the recurrent sets of $\chi_A$ and $\chi_B$. Hence, $H(E_A\cap\rmS(V))=E_B\cap\rmS(V)$. Define $G:E_A\rightarrow E_B$ by%
\begin{equation*}
  G(x) := \left\{\begin{array}{cc}
                       \|x\| H(\frac{x}{\|x\|}) & \mbox{for } x \in E_A\backslash\{0\},\\
                         0 & \mbox{for } x = 0.
                 \end{array}\right.%
\end{equation*}
The map $G$ is continuous at the origin, since it preserves the norm on $V$. Actually, it is a homeomorphism with inverse%
\begin{equation*}
  G^{-1}(x) := \left\{\begin{array}{cc}
                       \|x\| H^{-1}(\frac{x}{\|x\|}) & \mbox{for } x \in E_B\backslash\{0\},\\
                         0 & \mbox{for } x = 0.
                 \end{array}\right.%
\end{equation*}
The following calculation shows that $G$ is a topological conjugacy from $\varphi_A|_{E_A}$ to $\varphi_B|_{E_B}$. Let $x\in E_A^*$. Then $\|\rme^{At}x\|=\|x\|$, $H(x/\|x\|)\in E_B \cap \rmS(V)$, and%
\begin{eqnarray*}
  G(\rme^{At}x) &=& \|\rme^{At}x\|H\left(\frac{\rme^{At}x}{\|\rme^{At}x\|}\right) = \|x\|\frac{\rme^{Bt}H(\frac{x}{\|x\|})}{\|\rme^{Bt}H(\frac{x}{\|x\|})\|}\\
                &=& \|x\|\rme^{Bt}H\left(\frac{x}{\|x\|}\right) = \rme^{Bt}\left(\|x\| H\left(\frac{x}{\|x\|}\right)\right) = \rme^{Bt}G(x).%
\end{eqnarray*}
By the result of Kuiper \cite{Ku1} and Ladis \cite{Lad}, topological conjugacy of linear flows induced by endomorphisms with purely imaginary eigenvalues is equivalent to linear conjugacy. This implies the assertions (a)--(c).%
\end{proof}

\subsection{Stable Manifolds}%

In this subsection, we consider again a finite-dimensional real vector space $V$ and an endomorphism $A\in\End(V)$ all of whose eigenvalues lie on the imaginary axis. We investigate the stable manifolds of the projective flow $\psi_A$.%

\begin{definition}
Let $\varphi$ be a continuous flow on a compact metric space $(X,d)$. For a point $x\in X$, the set%
\begin{equation*}
  \st(x,\varphi) = \left\{y\in X\ :\ d(\varphi(t,x),\varphi(t,y)) \rightarrow 0 \mbox{ for } t\rightarrow\infty\right\}%
\end{equation*}
is called the \emph{stable manifold} of $x$. We define an equivalence relation on $X$ by%
\begin{equation*}
  x \sim y \quad:\Leftrightarrow\quad y \in \st(x,\varphi)%
\end{equation*}
\end{definition}

It is clear that the stable manifolds and their topological dimensions are invariants of topological conjugacy. In particular, they do not depend on the metric but only on the topology of $X$.%

\begin{proposition}\label{prop_eqclass}
Consider the projective flow $\psi_A$. Every equivalence class $[\P x]_{\sim}$ contains a unique element of $\RC(\psi_A)=\P E_A$.%
\end{proposition}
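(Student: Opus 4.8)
The plan is to establish the two halves of the statement separately: \emph{existence} (every class meets $\P E_A$) and \emph{uniqueness} (no class meets $\P E_A$ twice). Throughout I would equip $\P(V)$ with the metric induced by the inner product of $V$; since stable manifolds and their classes are topological invariants, this choice is harmless. I will move freely between $\P(V)$ and the sphere $\rmS(V)$ via $\pi=\P|_{\rmS(V)}$, using $\pi\chi_A^t=\psi_A^t\pi$ and the fact that $\pi$ is distance-nonincreasing, so that $\|\chi_A(t,x)-\chi_A(t,y)\|\to 0$ on $\rmS(V)$ forces $d(\psi_A(t,\P x),\psi_A(t,\P y))\to 0$ on $\P(V)$. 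I also recall the Jordan decomposition $A=S+N$ with $S$ skew-symmetric, $N$ nilpotent, $SN=NS$, and $E_A=\ker N$.

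For existence I would reuse the computation already carried out in the proof of Proposition~\ref{prop_recurrentsetsphere}. Given $x\in\rmS(V)$, write $x=x_1\oplus x_2$ with $x_1\in E_A$ and $x_2\in E_A^\bot$; if $x_2=0$, then $\P x\in\P E_A$ is recurrent and there is nothing to prove. Otherwise let $j$ be maximal with $N^jx_2\neq 0$ and set $p:=N^jx_2/\|N^jx_2\|\in E_A\cap\rmS(V)$. That proof already yields both $\|\chi_A(t,x)-\rme^{St}(\rme^{Nt}x_2/\|\rme^{Nt}x_2\|)\|\to 0$ and $\rme^{Nt}x_2/\|\rme^{Nt}x_2\|\to p$. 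Since $\rme^{St}$ is orthogonal, applying it to the second limit and adding gives $\|\chi_A(t,x)-\rme^{St}p\|\to 0$; and because $p\in E_A$ we have $\chi_A(t,p)=\rme^{St}p$. Thus the trajectory of $x$ is forward asymptotic to that of the recurrent point $p$ on $\rmS(V)$, and projecting shows that $\P p\in\P E_A$ lies in the same class as $\P x$.

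For uniqueness the key observation is that on $E_A$ the flow is isometric: since $N$ vanishes there, $\rme^{At}|_{E_A}=\rme^{St}|_{E_A}$ is orthogonal and hence induces an isometry of $\P E_A$ for the chosen metric. Consequently, for $\P p,\P q\in\P E_A$ the distance $d(\psi_A(t,\P p),\psi_A(t,\P q))=d(\P p,\P q)$ is constant in $t$, so it can tend to $0$ only if $\P p=\P q$. Therefore two distinct recurrent points never lie in a common stable class, which is exactly uniqueness.

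The isometry argument for uniqueness and the bookkeeping of the sphere-to-projective transfer are routine. The main obstacle I anticipate is the existence step: one must pin down the \emph{specific} recurrent trajectory to which $\P x$ is asymptotic—namely $\P(N^jx_2)$—rather than merely reproving, as in Proposition~\ref{prop_recurrentsetsphere}, that the orbit approaches the \emph{set} $\P E_A$; and one must take the limit on $\rmS(V)$ with a consistent lift so that it descends correctly to $\P(V)$. This is precisely where the orthogonality of $\rme^{St}$ does the work, since it preserves the convergence $\rme^{Nt}x_2/\|\rme^{Nt}x_2\|\to p$.
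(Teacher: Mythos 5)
Your proof is correct, and it splits naturally against the paper's: the uniqueness half is essentially identical to the paper's own argument (both fix a metric in which $\rme^{At}$ acts isometrically on $E_A$, so that $d(\psi_A(t,\P p),\psi_A(t,\P q))$ is constant in $t$ and distinct points of $\P E_A$ can never be forward asymptotic), while the existence half takes a genuinely different route. The paper goes back into Jordan coordinates: it decomposes $x$ over the subspaces $J_s$ spanned by the Jordan blocks of size $s$, writes out $\rme^{At}x_s$ componentwise, and divides by a suitable power of $t$ to exhibit the recurrent representative $\P y$ built from the top nonvanishing coordinates $x_{sj_s}$. You instead stay on the sphere, use the Jordan--Chevalley decomposition $A=S+N$ with $E_A=\ker N$, and recycle the two limits already established in the proof of Proposition~\ref{prop_recurrentsetsphere} --- namely $\bigl\|\chi_A(t,x)-\rme^{St}\bigl(\rme^{Nt}x_2/\|\rme^{Nt}x_2\|\bigr)\bigr\|\to0$ and $\rme^{Nt}x_2/\|\rme^{Nt}x_2\|\to N^jx_2/\|N^jx_2\|$ --- together with orthogonality of $\rme^{St}$ and the identity $\chi_A(t,p)=\rme^{St}p$ for $p\in E_A\cap\rmS(V)$, to get $\|\chi_A(t,x)-\chi_A(t,p)\|\to0$; the $1$-Lipschitz projection $\pi$ then pushes this down to $\P(V)$. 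The two arguments are consistent: since $N^mx_1=0$, your $N^jx_2$ is the highest nonvanishing $N$-image of $x$ itself, which in the paper's Jordan coordinates is precisely the vector $y$. Your route is shorter, coordinate-free, and makes transparent why the rotational factor $\rme^{St}$ is harmless; its only cost is that it pins down the recurrent representative of a class without describing the class, whereas the paper's coordinate computation also records which Jordan coordinates of an equivalent point can be chosen freely --- information that is reused afterwards, in the proof of Lemma~\ref{lem_topdim2}, to compute the dimensions $D_s$ of the stable manifolds. So as a proof of the proposition as stated, your argument is a clean substitute, but the coordinate analysis it avoids would still have to be done for the subsequent dimension counts.
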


\begin{proof}
First we prove that each equivalence class contains at most one element of $\RC(\psi_A)$. Let $p,q \in \P E_A$ be two distinct recurrent points of $\psi_A$. Then we find $x,y\in V$ with $p = \P x$, $q = \P y$, and a norm $\|\cdot\|$ such that $\rme^{At}$ acts as an isometry on $E_A$ for every $t\in\R$. Let us further assume that $\|x\|=\|y\|=1$. It is well-known that a metric on $\P(V)$, compatible with the quotient topology, is given by%
\begin{equation*}
  d(\P z, \P w) = \min\left\{\frac{z}{\|z\|} - \frac{w}{\|w\|},\frac{z}{\|z\|} + \frac{w}{\|w\|}\right\}.%
\end{equation*}
Using this metric, we find that%
\begin{equation*}
  d(\psi_A(t,p),\psi_A(t,q)) = \min\left\{x+y,x-y\right\} > 0%
\end{equation*}
for all $t\in\R$, which implies that $p$ and $q$ are not in the same equivalence class.%

To complete the proof, we have to analyze the Jordan structure of $A$. Let $s_{\max}$ be the largest dimension of a complex Jordan subspace of $A$. For each $s\in\{1,\ldots,s_{\max}\}$ let $J_s \subset V$ be the linear subspace defined as the sum of all real Jordan subspaces of dimension $s$ for the real eigenvalue $0$, or dimension $2s$ for a complex conjugate pair of imaginary nonzero eigenvalues. We write%
\begin{equation*}
  x = x_1 \oplus \cdots \oplus x_{s_{\max}}%
\end{equation*}
for the unique decomposition of $x\in V$ with $x_s \in J_s$. With respect to an appropriate basis of $J_s$, the component $x_s$ has coordinates%
\begin{equation*}
  x_s = (x_{s1},\ldots,x_{ss}),\qquad x_{si} \in \R^{d_s},%
\end{equation*}
where $d_s$ denotes the dimension of the sum of all real eigenspaces contained in $J_s$, such that the coordinates of $\rme^{At}x_s$ are given by%
\begin{eqnarray*}
  \left[\rme^{At}x_s\right]_1 &=& \rme^{\widehat{A}t}\left[ x_{s1} + tx_{s2} + \frac{t^2}{2!}x_{s3} + \cdots + \frac{t^{s-1}}{(s-1)!}x_{ss} \right],\\
  \left[\rme^{At}x_s\right]_2 &=& \rme^{\widehat{A}t}\left[ x_{s2} + tx_{s3} + \frac{t^2}{2!}x_{s4} + \cdots + \frac{t^{s-2}}{(s-2)!}x_{s(s-1)} \right],\\
  &\vdots&\\
  \left[\rme^{At}x_s\right]_s &=& \rme^{\widehat{A}t} x_{ss},%
\end{eqnarray*}
for an appropriately defined skew-symmetric matrix $\widehat{A}$. For fixed $x$ and $s$ let $j_s$ be the largest integer with $x_{sj_s}\neq 0$. Then, dividing by $t^{j_s-1}$, one sees that the point $\P x_s \in \P(J_s)$ is equivalent to the recurrent point $\P y_s$, where $y_s$ has coordinates $(x_{sj_s},0,\ldots,0) \in \R^{sd_s}$. For the point $\P x$ one takes $j := \max_{1\leq s \leq s_{\max}}j_s$ and divides by $t^{j-1}$ to see that also $\P x$ is equivalent to a recurrent point $\P y$, where $y$ is the sum of the $y_s$ with $j_s = j$. This concludes the proof.%
\end{proof}

Using the notation of the above proof, we define the projective subspaces%
\begin{equation*}
  \RC_s := \left\{\P x\in \P E_A\ :\ x_{s-1} = 0,\ x_{s-2} = 0,\ldots,x_1=0\right\}%
\end{equation*}
for $s = 1,\ldots,s_{\max}$. Since $\RC_{s+1}\subset\RC_s$, we have a disjoint union%
\begin{equation*}
  \RC(\psi_A) = \bigcup_{s=1}^{s_{\max}}\RC_s\backslash \RC_{s+1}\quad (\RC_{s_{\max}+1}:=\emptyset).%
\end{equation*}

The proof of the following lemma is immediate and will be omitted.%

\begin{lemma}\label{lem_topdim1}
The topological dimension of $\RC_s\backslash\RC_{s+1}$ is $\sum_{i=s}^{s_{\max}}d_i - 1$.%
\end{lemma}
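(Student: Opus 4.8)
The plan is to recognise $\RC_s$ as a real projective space and $\RC_{s+1}$ as a proper projective subspace of it, so that $\RC_s \setminus \RC_{s+1}$ is a nonempty open subset of a manifold, and then to invoke the standard fact that a nonempty open subset of a manifold has topological dimension equal to the dimension of the ambient manifold.

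First I would fix the decomposition $E_A = \bigoplus_{i=1}^{s_{\max}}(E_A \cap J_i)$, where, by the very definition of $d_i$, each summand $E_A \cap J_i$ is the sum of the real eigenspaces contained in $J_i$ and hence has dimension $d_i$. Writing $U_s := \bigoplus_{i=s}^{s_{\max}}(E_A \cap J_i)$, the defining condition $x_1 = \cdots = x_{s-1} = 0$ for a point $\P x \in \P E_A$ is exactly the requirement $x \in U_s$, so that $\RC_s = \P U_s$. Consequently $\RC_s$ is a real projective space of dimension
\begin{equation*}
  \dim \P U_s = \dim U_s - 1 = \sum_{i=s}^{s_{\max}} d_i - 1 =: N,
\end{equation*}
in particular a compact smooth manifold of dimension $N$ without boundary.

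Next I would observe that $\RC_{s+1} = \P U_{s+1}$ with $U_{s+1} \subset U_s$ a linear subspace of codimension $d_s$. Assuming $J_s \neq \{0\}$, i.e.\ $d_s \geq 1$ (if $J_s = \{0\}$ then $d_s = 0$ and $\RC_s \setminus \RC_{s+1}$ is empty, the only case to be excluded), the set $\RC_{s+1}$ is a closed projective subspace of $\RC_s$ of dimension $N - d_s < N$. Since it is closed, its complement $\RC_s \setminus \RC_{s+1}$ is a nonempty open subset of the $N$-manifold $\RC_s$.

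Finally, the conclusion follows from two standard properties of the topological (Lebesgue covering) dimension. By monotonicity of dimension for subsets of a separable metric space we get $\dim(\RC_s \setminus \RC_{s+1}) \leq \dim \RC_s = N$; conversely, every point of the open set $\RC_s \setminus \RC_{s+1}$ possesses a neighbourhood homeomorphic to an open ball in $\R^N$, which forces $\dim(\RC_s \setminus \RC_{s+1}) \geq N$. Combining the two inequalities yields $\dim(\RC_s \setminus \RC_{s+1}) = N = \sum_{i=s}^{s_{\max}} d_i - 1$. I do not expect any genuine obstacle here, which is consistent with the claim that the proof is immediate; the only points needing a little care are the bookkeeping that identifies $\RC_s$ with $\P U_s$ of the correct dimension and the appeal to the dimension-theoretic facts (monotonicity and the dimension of Euclidean balls), both of which are entirely classical.
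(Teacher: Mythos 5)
Your proof is correct, and since the paper explicitly omits the proof of this lemma as ``immediate,'' your argument is precisely the natural one the authors had in mind: identify $\RC_s$ with the projective space $\P U_s$ of dimension $\sum_{i=s}^{s_{\max}}d_i-1$, note that $\RC_{s+1}$ is a closed proper projective subspace, and use that a nonempty open subset of a manifold has full covering dimension. Your additional remark that the formula requires $d_s\geq 1$ (otherwise $\RC_s\backslash\RC_{s+1}$ is empty) is a worthwhile precision, consistent with the paper's later restriction to indices $s$ with $d_s\neq 0$ in Corollary \ref{cor_jordanblocks}.
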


The next lemma gives the correct formulas for the dimensions of the stable manifolds of the recurrent points. The formulas in \cite[Lemma 5]{Ku2} are not quite correct.%

\begin{lemma}\label{lem_topdim2}
The dimension of the stable manifold of some point $p\in\RC(\psi_A)$ is constant in $\RC_s\backslash\RC_{s+1}$ and its value is%
\begin{equation}\label{eq_dimform}
  D_s := n+1 - \sum_{i=s}^{s_{\max}}(i+1-s)d_i,%
\end{equation}
where $n+1$ is the dimension of $V$.%
\end{lemma}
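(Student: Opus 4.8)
The plan is to reduce the computation of the stable manifold of a recurrent point to the action of the nilpotent part of $A$ alone, and then to stratify the resulting set and count dimensions in the chain coordinates introduced in the proof of Proposition \ref{prop_eqclass}.

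First I would split off the skew-symmetric part. As in the proof of Proposition \ref{prop_recurrentsetsphere}, write $A = S + N$ with $S$ skew-symmetric, $N$ nilpotent, and $SN = NS$. Since $\rme^{St}\in\SO(V)$ is orthogonal, the induced map $\P\rme^{St}$ is an isometry of $\P(V)$ for the metric $d$ used in the proof of Proposition \ref{prop_eqclass}, and $E_A=\ker N$. Fix a recurrent representative $p=\P x_0$ with $x_0\in\RC_s\backslash\RC_{s+1}$, so $x_0\in\ker N$ and hence $\psi_A(t,p)=\P\rme^{St}x_0$. Using $\rme^{At}=\rme^{St}\rme^{Nt}$ and the isometry property, I obtain $d(\psi_A(t,\P y),\psi_A(t,p))=d(\P\rme^{Nt}y,\P x_0)$ for every $y\in V^*$. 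Thus $\st(p)=\{\P y:\P\rme^{Nt}y\to p\text{ as }t\to\infty\}$: the stable manifold for $\psi_A$ coincides with the stable manifold of $p$ under the purely nilpotent (polynomial) projective flow generated by $N$.

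Next I would determine the asymptotics of this nilpotent flow. Expanding $\rme^{Nt}y=\sum_{k\ge0}\frac{t^k}{k!}N^ky$ gives a $V$-valued polynomial of degree $m(y):=\max\{k:N^ky\neq0\}$, and dividing by $t^{m(y)}$ shows $\P\rme^{Nt}y\to\P N^{m(y)}y$. In the chain coordinates $y=\bigoplus_i y_i$, $y_i=(y_{i1},\ldots,y_{ii})$, the map $N$ acts as the downward shift $y_i\mapsto(y_{i2},\ldots,y_{ii},0)$, so $N^{m(y)}y$ is computed explicitly by shifting each chain down by $m(y)$. I then read off: $\P y\in\st(p)$ if and only if $N^{m(y)}y$ is a nonzero scalar multiple of $x_0$. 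Because $x_0$ has a nonzero component in the eigenspace of $J_s$ and none in any $J_i$ with $i<s$, this equation is infeasible for $m(y)>s-1$ and, for $m(y)\le s-1$, forces all chain coordinates in positions beyond $m(y)+1$ to vanish, ties the position-$(m(y)+1)$ coordinates to a single scalar $\kappa\neq0$ through the components of $x_0$, and leaves the coordinates in positions $\le m(y)$ free.

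Finally I would stratify $\st(p)$ by $m:=m(y)\in\{0,\ldots,s-1\}$ and count. Each stratum is a scaling-invariant, locally closed semialgebraic cone in $V^*$, and a direct count of the free real parameters ($\kappa$ together with the unconstrained low-position coordinates) gives dimension $1+\sum_i\min(i,m)d_i$; this is strictly increasing in $m$ and attains its maximum $1+\sum_i\min(i,s-1)d_i=D_s+1$ at $m=s-1$. Projectivizing lowers each dimension by one, and since the topological dimension of a finite union is the maximum of the dimensions of its pieces, $\dim\st(p)=D_s$. As the count depends only on $s$ and not on the particular recurrent representative $x_0$, this simultaneously establishes the claimed constancy on $\RC_s\backslash\RC_{s+1}$. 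The main obstacle I anticipate is precisely the bookkeeping in this last step: for a general recurrent $x_0$ with nonzero components in several $J_i$ ($i\ge s$), one must carefully separate the coordinates that are free, those forced to zero, and those coupled through $\kappa$, and verify that the top stratum indeed dominates the topological dimension of the union.
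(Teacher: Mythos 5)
Your proposal is correct, and the underlying coordinate count is the same as the paper's, but your packaging is genuinely different and more rigorous in two places where the paper is informal. The paper reuses the asymptotics from the proof of Proposition \ref{prop_eqclass} (divide the chain coordinates by $t^{j-1}$) and directly counts the freely choosable coordinates of points equivalent to $\P y$, tacitly assuming that the skew-symmetric factor is harmless and that the largest family of equivalent points is what determines the topological dimension. You justify both: writing $A=S+N$ and using that $\rme^{St}$ acts as an isometry of $\P(V)$ gives the exact identity $\st(p)=\{\P y:\P\rme^{Nt}y\rightarrow p\}$, and since $N^{m+1}y=\kappa Nx_0=0$ is automatic once $N^{m}y=\kappa x_0$, this set is precisely the finite disjoint union over $m\in\{0,\ldots,s-1\}$ of your strata, each an open subset of a projective subspace of dimension $\sum_i\min(i,m)d_i$; the sum theorem for topological dimension (each stratum being locally closed, hence $F_\sigma$) then legitimately yields $\dim\st(p)=\max_m\sum_i\min(i,m)d_i=\sum_i\min(i,s-1)d_i$. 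Your arithmetic checks out: using $n+1=\sum_i i\,d_i$ one gets $\sum_i\min(i,s-1)d_i=n+1-\sum_{i\geq s}(i+1-s)d_i=D_s$, the feasibility bound $m\leq s-1$ follows exactly as you say from $x_{0}$ having nonzero $J_s$-eigenspace component, and monotonicity in $m$ follows from $d_s>0$. What your route buys is rigor at the isometry reduction and at the passage from free-coordinate counts to topological dimension; what the paper's route buys is brevity, since it leans on Proposition \ref{prop_eqclass} wholesale. The bookkeeping you flag as the main obstacle is indeed the only remaining work, and it closes as you predict: for chains $i<s$ the equation forces positions $m+1,\ldots,i$ to vanish, for chains $i\geq s$ position $m+1$ is tied to the single scalar $\kappa$, positions $\leq m$ are always free, and since the resulting count depends only on $s$, constancy on $\RC_s\backslash\RC_{s+1}$ is automatic.
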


\begin{proof}
First consider the case $s = s_{\max}$. We have%
\begin{equation*}
  \RC_{s_{\max}} = \left\{\P x \in \P E_A\ :\ x_{s_{\max}}\neq 0,\ x_{s_{\max}-1}=\cdots=x_1=0\right\}.%
\end{equation*}
Hence, any point $\P y \in \RC_{s_{\max}}$ satisfies%
\begin{equation*}
  y_{s_{\max}} = (z,0,\ldots,0),\quad y_s = 0 \mbox{ for } s \leq s_{\max}-1%
\end{equation*}
for some $z \in \R^{d_{s_{\max}}}$. From the proof of Proposition \ref{prop_eqclass} it follows that the points $\P x$ which are equivalent to $\P y$ satisfy%
\begin{equation*}
  x_{s_{\max}} = (\underbrace{\star,\ldots,\star}_{j-1 \mbox{ entries}},z,\underbrace{0,\ldots,0}_{s_{\max}-j \mbox{ entries}}).%
\end{equation*}
For the components $x_s$ with $s<s_{\max}$, at most the first $j-1$ components can be different from zero. The dimension of $[\P y]_{\sim}$ is determined by the set of those $\P x$ which satisfy%
\begin{equation*}
  x_{s_{\max}} = (\star,\ldots,\star,z).%
\end{equation*}
Here, the number of real coordinates that can be chosen freely is%
\begin{equation*}
  (s_{\max}-1)d_{s_{\max}} + \sum_{s=1}^{s_{\max}-1} sd_s = \sum_{s=1}^{s_{\max}}sd_s - d_{s_{\max}} = n+1 - d_{s_{\max}}.%
\end{equation*}
This proves the dimension formula for points in $\RC_{s_{\max}}$.%

Now consider a point $\P y \in \RC_{s_{\max}-1}\backslash\RC_{s_{\max}}$. This point satisfies%
\begin{equation*}
  y_{s_{\max}} = (z_1,0,\ldots,0),\quad y_{s_{\max}-1} = (z_2,0,\ldots,0)\neq 0,\quad y_s = 0 \mbox{ for } s \leq s_{\max}-2.%
\end{equation*}
With the same arguments as above, one sees that the maximal number of real coordinates in $J_{s_{\max}-1}\oplus \cdots \oplus J_1$ that can be chosen freely is given by%
\begin{equation*}
  (s_{\max}-2)d_{s_{\max}-1} + \sum_{s=1}^{s_{\max}-2}sd_s = n+1 - s_{\max}d_{s_{\max}} - d_{s_{\max}-1}.%
\end{equation*}
But there are also coordinates in $J_{s_{\max}}$ that can be chosen freely. The number of these coordinates is%
\begin{equation*}
  d_{s_{\max}}(s_{\max}-1) - d_{s_{\max}} = d_{s_{\max}}(s_{\max}-2).%
\end{equation*}
Altogether we have $n+1 - d_{s_{\max}-1} - 2d_{s_{\max}}$ free coordinates. For arbitrary $s\in\{1,\ldots,s_{\max}\}$, similarly one sees that the number of free coordinates is given by%
\begin{equation*}
  (s-1)d_s + \sum_{i=1}^{s-1}id_i + (s-1)\sum_{i=s+1}^{s_{\max}}d_i = n+1 - \sum_{i=s}^{s_{\max}}(i+1-s)d_i.%
\end{equation*}
This concludes the proof.%
\end{proof}

We illustrate the preceding lemma by a concrete example.%

\begin{example}
Consider the matrix%
\begin{equation*}
  A = \left(\begin{array}{ccccccccc}
               0 & 1 & & & & & & & \\
               & 0 & 1 & & & & & & \\
               & & 0 & & & & & & \\
               & & & 0 & 1 & & & & \\
               & & & & 0 & 1 & & & \\
               & & & & & 0 & & & \\
               & & & & & & 0 & 1 & \\
               & & & & & & &  0 & \\
               & & & & & & &  & 0
            \end{array}\right).%
\end{equation*}
We have $s_{\max} = 3$, $d_3 = 2$, $d_2 = 1$ and $d_1 = 1$. Applying the flow $\rme^{At}$ to some vector $y = \sum_{i=1}^9 y_i e_i \in\R^9$, we obtain%
\begin{equation*}
  \rme^{At}y = \left(\begin{array}{c}
                  y_1 + ty_2 + \frac{t^2}{2}y_3\\
                  y_2 + ty_3\\
                  y_3\\
                  y_4+ty_5+\frac{t^2}{2}y_6\\
                  y_5+ty_6\\
                  y_6\\
                  y_7+ty_8\\
                  y_8\\
                  y_9 \end{array}\right).%
\end{equation*}
The recurrent set of the projective flow $\psi_A$ is given by $\RC(\psi_A) = \{\P y : y_2 = y_3 = y_5 = y_6 = y_8 = 0\}$ and consists of equilibria. Furthermore,%
\begin{equation*}
  \RC_{s_{\max}} = \RC_3 = \left\{\P y : y_i = 0 \mbox{ for all } i \notin \{1,4\} \right\}.%
\end{equation*}
If $y_3\neq0$ and $y_6\neq0$, one sees (dividing by $t^2$) that $\P\rme^{At}y$ converges to the point $p=\P(y_3,0,0,y_6,0,0,0,0,0) \in \RC_{s_{\max}}$. Hence, one can choose the
coordinates $y_i$ with $i\neq\{3,6\}$ freely, which shows that the dimension of the stable manifold of $p$ is $7 = 9 - d_3$, which is consistent with formula \eqref{eq_dimform}. Now, consider the set%
\begin{equation*}
  \RC_2 \backslash \RC_3 = \left\{\P y : y_9 = 0,\ y_7 \neq 0,\ y_2 = y_3 = y_5 = y_6 = y_8 = 0 \right\}.%
\end{equation*}
If $y_3 = y_6 = 0$, one sees (dividing by $t$) that $\P\rme^{At}y$ converges to $p=\P(y_2,0,0,y_5,0,0,y_8,0,0)$. We have $p\in\RC_2\backslash\RC_3$ if and only if $y_2=y_5=0$. Hence, we see that the dimension of the stable manifold of $p$ is $4 = 9 - d_2 - 2d_3$, which is consistent with formula \eqref{eq_dimform}. Finally, consider%
\begin{equation*}
  \RC_1 \backslash \RC_2 = \left\{\P y : y_9 \neq 0,\ y_2 = y_3 = y_5 = y_6 = y_8 = 0\right\}.%
\end{equation*}
If $y_9\neq0$ and $y_2=y_3=y_5=y_6=y_8=0$, then $\P\rme^{At}y$ converges to $p=\P(y_1,0,0,y_4,0,0,y_7,0,y_9)\in\RC_1\backslash\RC_2$. Hence, one cannot choose any coordinates freely, and thus the dimension of the stable manifold is $0 = 9 - d_1 - (2d_2 + 3d_3)$, again consistent with formula \eqref{eq_dimform}.%
\end{example}

The following corollary is the last ingredient for the proof of our classification result.%

\begin{corollary}\label{cor_jordanblocks}
Let $A$ and $B$ be endomorphisms of $V$ all of whose eigenvalues lie on the imaginary axis, such that $\psi_A$ and $\psi_B$ are topologically conjugate. Then the numbers $s_{\max}$ and $d_s$, $s=1,\ldots,s_{\max}$, coincide for $A$ and $B$, that is, the Jordan structures of $A$ and $B$ coincide.%
\end{corollary}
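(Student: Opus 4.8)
The plan is to extract, from the topological conjugacy of $\psi_A$ and $\psi_B$, enough invariants to pin down both the list of Jordan block sizes $s_{\max}$ and the fibre dimensions $d_s$. The two raw materials are already available: Corollary~\ref{cor_eigenvalues}, which tells us that the spectra, the geometric multiplicities, and the numbers of Jordan blocks agree for $A$ and $B$; and Lemmas~\ref{lem_topdim1} and~\ref{lem_topdim2}, which compute the topological dimensions of the recurrent strata $\RC_s\backslash\RC_{s+1}$ and of the stable manifolds of points in those strata. Since a topological conjugacy maps $\RC(\psi_A)$ homeomorphically onto $\RC(\psi_B)$ and preserves stable manifolds together with their dimensions, all of these numbers are conjugacy invariants. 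The task is then purely combinatorial: to show that the data $(s_{\max},d_1,\ldots,d_{s_{\max}})$ is recoverable from these invariants.

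First I would fix notation with superscripts $A,B$ for the two endomorphisms and argue that $s_{\max}^A=s_{\max}^B=:s_{\max}$. The conjugacy $h$ restricts to a homeomorphism between the recurrent sets, and the filtration $\RC_1\supset\RC_2\supset\cdots$ is intrinsic: $\RC_s\backslash\RC_{s+1}$ consists exactly of those recurrent points whose stable manifold has dimension $D_s$, and by Lemma~\ref{lem_topdim2} the values $D_s$ are strictly increasing as $s$ decreases (each step adds at least $d_{s_{\max}}\geq 1$ via the $(i+1-s)$ coefficient), so a recurrent point lies in a well-defined stratum determined purely by the dimension of its stable manifold. Consequently $h$ must carry the top stratum of $\psi_A$ to the top stratum of $\psi_B$, whence $s_{\max}^A=s_{\max}^B$; more generally $h$ matches the strata index by index.

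Next I would recover the $d_s$ by a downward induction on $s$, reading off two linear functionals of the data at each step. From Lemma~\ref{lem_topdim1}, the dimension of the stratum $\RC_s\backslash\RC_{s+1}$ equals $\sum_{i=s}^{s_{\max}}d_i-1$; since $h$ matches strata, these partial sums agree for $A$ and $B$ for every $s$. Taking successive differences of $\sum_{i=s}^{s_{\max}}d_i$ immediately yields $d_s^A=d_s^B$ for each $s$. (One could equally well use the stable-manifold dimensions $D_s$ from Lemma~\ref{lem_topdim2} and invert the triangular system relating the $D_s$ to the $d_i$, but the stratum dimensions give the cleaner recursion.) This completes the identification of the full tuple $(s_{\max},d_1,\ldots,d_{s_{\max}})$, which is exactly the Jordan structure data, since the $d_s$ together with $s_{\max}$ encode how many Jordan blocks of each size occur and the real dimensions of the associated eigenspaces.

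The only genuine subtlety — and the step I would treat most carefully — is justifying that the strata are distinguishable by the invariants, i.e.\ that the correspondence induced by $h$ really is stratum-preserving rather than merely set-preserving on $\RC(\psi_A)$. This rests on the strict monotonicity of $s\mapsto D_s$ and on the fact that the topological dimension of the stable manifold is a conjugacy invariant independent of the chosen metric, as noted after the definition of the stable manifold. Once that monotonicity is verified from formula~\eqref{eq_dimform}, the rest is the elementary triangular inversion above, and no further obstacle remains.
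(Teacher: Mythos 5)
Your overall strategy---using the stratification of the recurrent set and the stable-manifold dimensions as conjugacy invariants and then recovering $(s_{\max},d_1,\ldots,d_{s_{\max}})$ combinatorially---is the same as the paper's, but there is a genuine gap at exactly the step you call ``the only genuine subtlety,'' and your treatment of it fails. The conjugacy matches the \emph{nonempty} strata of $\psi_A$ and $\psi_B$ in order, but it does not match them ``index by index,'' and in particular the inference ``$h$ carries top stratum to top stratum, whence $s_{\max}^A=s_{\max}^B$'' is a non sequitur. The problem is that intermediate $d_s$ may vanish, in which case $\RC_s\backslash\RC_{s+1}$ is empty, and the index $s$ of a nonempty stratum cannot be read off from its stable-manifold dimension: $D^A_s$ and $D^B_s$ are \emph{different} functions of $s$, each depending on its own data $(d_i)$. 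Strict monotonicity of $s\mapsto D_s$ (which, incidentally, goes the other way: the paper shows $D_s>D_{s-1}$, so $D_s$ decreases as $s$ decreases) only shows that \emph{within one flow} distinct nonempty strata have distinct stable-manifold dimensions; it does not identify indices \emph{across} the two flows. Concretely, let $A$ be nilpotent on $\R^5$ with Jordan blocks of sizes $4$ and $1$, and $B$ nilpotent with blocks of sizes $3$ and $2$. Each has exactly two nonempty strata, with ordered stratum dimensions $(0,1)$ by Lemma \ref{lem_topdim1}, and the top strata even share the stable-manifold dimension $D_4^A=D_3^B=4$. So the data your recursion actually uses (the ordered dimensions of the nonempty strata) is identical for $A$ and $B$, and taking successive differences of $\sum_{i\geq s}d_i$ returns the same list $(1,1)$ for both; yet $d_4^A=1\neq 0=d_4^B$, so your conclusion ``$d_s^A=d_s^B$ for each $s$'' is false as derived, and the Jordan structures differ.

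What is missing is precisely the hard part of the paper's proof. After matching the nonempty strata in order---which, via Lemma \ref{lem_topdim1}, yields only that the ordered lists of \emph{nonzero} $d$'s agree, $\delta_i:=d_{s_i(A)}(A)=d_{s_i(B)}(B)$ where $s_1(\cdot)>\cdots>s_k(\cdot)$ are the indices with $d_s\neq0$---one must still prove that the indices themselves agree, $s_i(A)=s_i(B)$. The paper does this by equating the stable-manifold dimensions \eqref{eq_dimform} of corresponding strata, which gives $\sum_{j<i}(s_j(A)-s_i(A))\delta_j=\sum_{j<i}(s_j(B)-s_i(B))\delta_j$ for each $i$; an induction then shows that all consecutive differences $s_j-s_{j+1}$ coincide for $A$ and $B$, and finally the identity $\sum_i s_i(\cdot)\delta_i=n+1=\dim V$ pins down the absolute values. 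In the example above it is exactly this step that detects the discrepancy, since $D_1^A=0$ while $D_2^B=2$. Your parenthetical remark about ``inverting the triangular system'' gestures at this, but the triangular system relating the $D$'s to the $d_i$'s can only be written down once the index matching is known---and that is the statement that has to be proved.
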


\begin{proof}
Let $h$ denote the homeomorphism which conjugates $\psi_A$ and $\psi_B$. For different values of $s$, the dimension $D_s$ (cf.~Formula \eqref{eq_dimform}) has a different value, in fact we have $D_s > D_{s-1}$ for all $s$, which follows from%
\begin{eqnarray*}
  D_s - D_{s-1} &=& - \sum_{i=s}^{s_{\max}}(i+1-s)d_i + \sum_{i=s-1}^{s_{\max}}(i+1-s+1)d_i\\
                &=& \sum_{i=s-1}^{s_{\max}}d_i \geq d_{s_{\max}} > 0.%
\end{eqnarray*}
Since $h$ maps stable manifolds of $\psi_A$ onto corresponding stable manifolds of $\psi_B$, preserving the dimension, we can thus conclude that the number $\#\{s : d_s \neq 0\}$ is the same for both flows. If $s_{\max}(A) = s_1(A) > \cdots > s_k(A)$ are the corresponding numbers for $A$ with $d_{s_i(A)}(A)\neq0$ and $s_{\max}(B) = s_1(B) > \cdots > s_k(B)$ the ones for $B$, then 
\begin{equation*}
  h\left(\RC_{s_i(A)}\backslash \RC_{s_i(A)+1}\right) = \RC_{s_i(B)}\backslash\RC_{s_i(B)+1} \mbox{\quad for\ } i=1,\ldots,k.%
\end{equation*}
Since $h$ preserves the dimension, Lemma \ref{lem_topdim1} yields%
\begin{equation*}
  \delta_i := d_{s_i(A)}(A) = d_{s_i(B)}(B) \mbox{\quad for\ }  i=1,\ldots,k.%
\end{equation*}
Putting $s = s_2(A)$ and $s = s_2(B)$ in Formula \eqref{eq_dimform} gives%
\begin{equation}\label{eq_sdiff1}
  D_{s_2(A)}(A) = D_{s_2(B)}(B) \quad \Rightarrow \quad s_1(A) - s_2(A) = s_1(B) - s_2(B).%
\end{equation}
Now let us assume that $s_j(A) - s_{j+1}(A) = s_j(B) - s_{j+1}(B)$ holds for $j=1,2,\ldots,l-2$ and proceed by induction on $l$. We have%
\begin{equation*}
  \sum_{i=s_l(A)+1}^{s_1(A)}(i+1-s_l(A))d_i(A) = \sum_{i=s_l(B)+1}^{s_1(B)}(i+1-s_l(B))d_i(B).%
\end{equation*}
This is equivalent to%
\begin{equation*}
  \sum_{j=1}^{l-1}(s_j(A)-s_l(A))\delta_j = \sum_{j=1}^{l-1}(s_j(B)-s_l(B))\delta_j.%
\end{equation*}
For each $j\in\{1,\ldots,l-1\}$ we can write%
\begin{eqnarray*}
  s_j(A) - s_l(A) &=& \sum_{i=j}^{l-1}s_i(A) - \sum_{i=j+1}^l s_i(A)\\
                  &=& \sum_{i=j}^{l-1}(s_i(A) - s_{i+1}(A))\\
                  &=& \sum_{i=j}^{l-2}(s_i(B) - s_{i+1}(B)) + (s_{l-1}(A) - s_l(A))\\
                  &=& (s_j(B) - s_{l-1}(B)) - (s_{l-1}(A) - s_l(A)).%
\end{eqnarray*}
This yields%
\begin{eqnarray*}
  && \sum_{j=1}^{l-1}\left[(s_j(B) - s_{l-1}(B)) - (s_{l-1}(A) - s_l(A))\right]\delta_j \\
  && = \sum_{j=1}^{l-1}\left[(s_j(B) - s_{l-1}(B)) + (s_{l-1}(B) - s_l(B))\right]\delta_j,%
\end{eqnarray*}
which immediately gives $s_{l-1}(A)-s_l(A) = s_{l-1}(B) - s_l(B)$ and hence concludes the induction step. It easily follows that the differences $s_i(A) - s_i(B)$ are constant. Since $\sum_{i=1}^k s_i(\cdot)\delta_i = n+1$ for $(\cdot)\in\{A,B\}$, we obtain%
\begin{equation*}
  0 = \sum_{i=1}^k \left(s_i(A) - s_i(B)\right)\delta_i \quad \Rightarrow \quad s_i(A) - s_i(B) \equiv 0.%
\end{equation*}
The proof is finished.%
\end{proof}

\section{The Main Theorem}\label{sec_mt}

Finally, we can give a full proof of the announced classification result:%

\begin{theorem}\label{thm_hauptsatz}
Let $V$ be a finite-dimensional real vector space and $A,B\in\End(V)$. Then the induced projective flows $\psi_A$ and $\psi_B$ are topologically conjugate on $\P(V)$ if and only if, with respect to individual linear coordinates, $A$ and $B$ can be written in the form%
\begin{eqnarray*}
  A &=& (\lambda_1 \id + \sigma_1) \oplus (\lambda_2\id + \sigma_2) \oplus \cdots \oplus (\lambda_k\id + \sigma_k),\\
  B &=& (\mu_1 \id + \sigma_1) \oplus (\mu_2\id + \sigma_2) \oplus \cdots \oplus (\mu_k\id + \sigma_k),%
\end{eqnarray*}
with real numbers $\lambda_1 > \lambda_2 > \cdots > \lambda_k$, $\mu_1 > \mu_2 > \cdots > \mu_k$, and endomorphisms $\sigma_1,\ldots,\sigma_k$ with eigenvalues lying on the imaginary axis.%
\end{theorem}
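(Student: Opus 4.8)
The plan is to prove the two implications separately, drawing on the results assembled in Sections~\ref{sec_constr} and~\ref{sec_dynprops}. The direction asserting that the displayed normal forms imply topological conjugacy is already settled: if $A$ and $B$ have the stated shape with a common sequence $\sigma_1,\ldots,\sigma_k$ and strictly decreasing real parts, then Theorem~\ref{thm_projconj} applies verbatim and yields $A\cong_\P B$. So the entire substance of the argument lies in the converse, and my proposal for it is to reduce the global conjugacy to a family of block-wise statements about endomorphisms with purely imaginary spectrum.

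For the converse I would start from an arbitrary topological conjugacy $h:\P(V)\rightarrow\P(V)$ from $\psi_A$ to $\psi_B$ and first invoke Proposition~\ref{prop_morse}. After replacing $A$ and $B$ by linearly conjugate endomorphisms, this lets me assume
\[
  A = \bigoplus_{i=1}^k(\lambda_i\id+\sigma_i(A)),\qquad B=\bigoplus_{i=1}^k(\mu_i\id+\sigma_i(B)),
\]
with $\lambda_1>\cdots>\lambda_k$, $\mu_1>\cdots>\mu_k$, the $\sigma_i(A),\sigma_i(B)$ having purely imaginary spectrum, and the $i$-th summands living on subspaces of equal dimension. It then remains to show that, for each $i$, the blocks $\sigma_i(A)$ and $\sigma_i(B)$ are linearly conjugate; once this is known, a block-wise change of coordinates turns both into a common $\sigma_i$ and produces exactly the asserted form (the freedom of \emph{individual} linear coordinates for $A$ and $B$ is what allows the two sides to share the same $\sigma_i$).

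The key step is to localise $h$ to the individual Lyapunov spaces. The finest Morse decompositions of $\psi_A$ and $\psi_B$ are $\{\P V_i(A)\}$ and $\{\P V_i(B)\}$, and their orders are the \emph{total} orders $\P V_k\preceq\cdots\preceq\P V_1$; since a conjugacy carries the finest Morse decomposition of $\psi_A$ onto that of $\psi_B$ preserving order, $h$ must satisfy $h(\P V_i(A))=\P V_i(B)$ for every $i$. Thus $h$ restricts to a topological conjugacy between $\psi_A|_{\P V_i(A)}$ and $\psi_B|_{\P V_i(B)}$. Because passing from $\sigma_i$ to $\lambda_i\id+\sigma_i$ only rescales each trajectory by the positive scalar $\rme^{\lambda_i t}$, these restricted flows are precisely the projective flows $\psi_{\sigma_i(A)}$ and $\psi_{\sigma_i(B)}$. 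Hence for each $i$ I obtain a topological conjugacy of projective flows induced by endomorphisms with purely imaginary spectrum, which is exactly the setting treated in Section~\ref{sec_dynprops}.

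To this situation I apply the algebraic invariants of that section: Corollary~\ref{cor_eigenvalues} forces the spectra, geometric multiplicities and numbers of Jordan blocks of $\sigma_i(A)$ and $\sigma_i(B)$ to agree, while Corollary~\ref{cor_jordanblocks} matches the block-size data $s_{\max}$ and $d_s$; together these should identify the full Jordan structure and hence yield the required linear conjugacy. I expect the delicate point to be exactly this last extraction. The quantities $s_{\max}$ and $d_s$ aggregate over all eigenvalues, so on their own they need not decide which imaginary part carries which block sizes. The repair I would carry out is to exploit that a recurrent point concentrated in a single eigenspace is periodic with period governed by the imaginary part of the eigenvalue, so that $h$ — preserving both periods of periodic orbits and the stable-manifold dimensions of Lemma~\ref{lem_topdim2} — correlates each rotation frequency with its attached block sizes rather than merely with the totals. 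Making this correlation precise, and thereby confirming that Corollaries~\ref{cor_eigenvalues} and~\ref{cor_jordanblocks} in combination really pin the Jordan form down to linear conjugacy, is the step on which the entire converse rests.
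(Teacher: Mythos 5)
Your architecture coincides with the paper's own proof: Theorem~\ref{thm_projconj} settles the construction direction; Proposition~\ref{prop_morse} together with the order-preserving transport of the finest Morse decomposition localizes $h$ to the Lyapunov spaces, where the restricted flows are $\psi_{\sigma_i(A)}$ and $\psi_{\sigma_i(B)}$; and the invariants of Section~\ref{sec_dynprops} are then applied blockwise. You have also correctly identified the real difficulty: Corollary~\ref{cor_eigenvalues} and Corollary~\ref{cor_jordanblocks}, applied globally to one block, do \emph{not} determine its Jordan form, because $s_{\max}$ and the $d_s$ aggregate over all eigenvalues. (Concretely: a size-$2$ real Jordan block for $\pm i$ plus a size-$1$ block for $\pm 2i$, versus a size-$1$ block for $\pm i$ plus a size-$2$ block for $\pm 2i$, have the same spectrum, the same geometric multiplicities, the same $s_{\max}$ and the same $d_1,d_2$, yet are not linearly conjugate.) But at exactly this point your proof stops: you only \emph{propose} to correlate rotation frequencies with block sizes via periods of periodic orbits, and you do not carry it out. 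This is a genuine gap, and not a small one: the set of periodic points of a given minimal period contains not only points concentrated in a single real eigenspace but also points spread over commensurable frequencies (e.g.\ frequencies $\alpha,3\alpha,5\alpha,\ldots$ all admit the period $\pi/\alpha$), so extracting the frequency-to-block-size correspondence from period data amounts to reproving a case of the Kuiper--Ladis theorem by hand.

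The paper closes this gap with tools you already listed. By Lemma~\ref{lem_topdim2} the stable-manifold dimension equals $D_s$ on $\RC_s\backslash\RC_{s+1}$, and the $D_s$ are pairwise distinct; since $h$ preserves stable manifolds and their dimensions, it maps each stratum $\RC_s\backslash\RC_{s+1}$ of $A$ onto that of $B$, hence preserves the nested projective subspaces $\RC_s=\P\bigl(E_A\cap(J_s\oplus\cdots\oplus J_{s_{\max}})\bigr)$. Restricting $h$ to $\RC_s$ therefore gives a topological conjugacy between the projective flows of the \emph{semisimple} endomorphisms $A|_{E^{(s)}_A}$ and $B|_{E^{(s)}_B}$, where $E^{(s)}$ denotes the sum of the real eigenspaces lying in Jordan blocks of size at least $s$. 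Applying Corollary~\ref{cor_eigenvalues}(a),(b) to these restricted flows shows that, for every $s$ and every eigenvalue $i\alpha$, the number of Jordan blocks of size $\geq s$ carrying $i\alpha$ is the same for $A$ and $B$; differencing consecutive values of $s$ then yields the number of blocks of each exact size per eigenvalue, i.e.\ linear conjugacy of $\sigma_i(A)$ and $\sigma_i(B)$, which completes the converse. This is precisely the ``careful application of Corollary~\ref{cor_eigenvalues}'' referred to in the paper's proof; it replaces your period argument by the Kuiper--Ladis input already encapsulated in that corollary, and it is the step your proposal still needs.
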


\begin{proof}
The proof follows by combining Theorem \ref{thm_projconj} with Proposition \ref{prop_morse}, Corollary \ref{cor_eigenvalues} and Corollary \ref{cor_jordanblocks}. Indeed, Theorem \ref{thm_projconj} settles the direction of the proof which involves the construction of the conjugacy. In the other direction of the proof, Proposition \ref{prop_morse} reduces everything to endomorphisms with eigenvalues whose real parts vanish. Then Corollary \ref{cor_jordanblocks} shows that, up to the eigenvalues, the Jordan structures of both endomorphisms are the same. Finally, a careful application of Corollary \ref{cor_eigenvalues} (using the fact that the sets
$\RC_s\backslash \RC_{s+1}$ and hence the projective subspaces $\RC_s$ are respected by the topological conjugacy, as a consequence of Lemma \ref{lem_topdim2}) shows that the eigenvalues of both endomorphisms are the same and that they are distributed in the right way over the Jordan blocks of different sizes.%
\end{proof}

\section{Acknowledgements}%

We are indebted to Fritz Colonius for numerous discussions about the mathematical details of this paper. Furthermore, we thank Mauro Patr\~ao for critically reading an earlier version of this paper and suggesting some simplifications.%

\end{document}